\renewcommand{\phi}{\varphi}
\newcommand{\eps}{\varepsilon}
\newcommand{\calA}{\mathcal{A}}
\newcommand{\calI}{\mathcal{I}}
\newcommand{\R}{\mathbb{R}}
\newcommand{\N}{\mathbb{N}}
\newcommand{\1}{\mathbb{1}}
\renewcommand{\bar}{\overline}
\DeclareMathOperator{\proj}{\mathrm{proj}}
\newcommand{\dual}[1]{\langle #1 \rangle}
\newcommand{\norm}[1]{\| #1 \|}
\newcommand{\setof}[2]{\left\{#1:#2\right\}}
\newcommand{\convw}{\rightharpoonup^*}
\newcommand{\conv}{\rightarrow}
\begin{document}
\title{Quasi-solution of linear inverse problems in non-reflexive Banach spaces}
\author{Christian Clason\thanks{%
        University Duisburg-Essen, Faculty of Mathematics,
        Thea-Leymann-Str.~9,
        45127 Essen, Germany
        (\email{christian.clason@uni-due.de}, \email{andrej.klassen@uni-due.de})
    } 
    \and Andrej Klassen\footnotemark[1]
}
\date{June 26, 2018}
\hypersetup{
    pdftitle = {Quasi-solution of linear inverse problems in non-reflexive Banach spaces},
    pdfauthor = {C.~Clason and A.~Klassen},
}

\maketitle

\begin{abstract}
    We consider the method of quasi-solutions (also referred to as Ivanov regularization) for the regularization of linear ill-posed problems in non-reflexive Banach spaces. Using the equivalence to a metric projection onto the image of the forward operator, it is possible to show regularization properties and to characterize parameter choice rules that lead to a convergent regularization method, which includes the Morozov discrepancy principle. Convergence rates in a suitably chosen Bregman distance can be obtained as well.
    We also address the numerical computation of quasi-solutions to inverse source problems for partial differential equations in $L^\infty(\Omega)$ using a semismooth Newton method and a backtracking line search for the parameter choice according to the discrepancy principle.
    Numerical examples illustrate the behavior of quasi-solutions in this setting.
\end{abstract}

\section{Introduction}

We consider the operator equation $Au=y$ for a linear injective operator $A:U \to Y$ with dense range, where $Y$ is a uniformly convex Banach space and $U$ the -- not necessarily reflexive -- dual of a separable normed vector space. Instead of the exact data $y\in R(A)$ (the range of $A$), usually only a noisy measurement $y^\delta \in Y\setminus R(A)$ with $\norm{y^\delta-y}_Y\leq \delta$ for some noise level $\delta$ is available, and regularization methods have to be applied to obtain a stable (approximate) solution.
Three related strategies that can be found in the literature are:
\begin{itemize}
    \item \emph{Tikhonov regularization} \cite{Tikhonov:1963}, consisting in solving for some $\alpha>0$ the minimization problem
        \begin{equation}\label{eq:tikhonov}
            \tag{T}
            \min_{u\in U} \frac12 \norm{Au-y}_Y^2 + \frac\alpha2 \norm{u}_U^2;
        \end{equation}
    \item \emph{Morozov regularization} (also known as the \emph{method of residuals}) \cite{Morozov}, consisting in solving the constrained minimization problem
        \begin{equation}\label{eq:morozov}
            \tag{M}
            \min_{u\in U} \norm{u}_U \quad \text{subject to}\quad \norm{Au-y}_Y\leq \delta;
        \end{equation}
    \item \emph{Ivanov regularization} (also known as the \emph{method quasi-solutions}) \cite{Iva62}, consisting in solving for some $\rho>0$ the constrained minimization problem
        \begin{equation}\label{eq:ivanov}
            \tag{I}
            \min_{u\in U} \norm{Au-y}_Y \quad\text{subject to} \quad \norm{u}_U\leq \rho.
        \end{equation}
\end{itemize}
It is the last one that is the focus of this work. Let us first briefly comment on the relation of these approaches. Since $A$ is linear, all three minimization problems are convex, and hence their solutions coincide for a suitable choice of $\alpha$ and $\rho$, respectively; see, e.g., \cite[Sec.~3.5]{IvaVasTan02}. However, this is no longer the case for nonlinear operators, as was shown by a counterexample in \cite{LorWor13} (which also, to the best of our knowledge, coined the term \emph{Ivanov regularization}); see also \cite[Ex.~1]{Klassen:2018}. Furthermore, if $U$ is not a Hilbert space, \eqref{eq:tikhonov} is no longer necessarily easier to solve than \eqref{eq:morozov} or \eqref{eq:ivanov}. In fact, for $U=L^\infty(\Omega)$ (which motivates the problem setting stated in the beginning), \cite{CIK:2012} proposed solving \eqref{eq:tikhonov} by transforming it into the form \eqref{eq:ivanov} with an additional penalty $\frac\alpha2\rho^2$ for (the in this case free variable) $\rho$. This motivates directly considering \eqref{eq:ivanov} as a regularization strategy.

A further motivation is parameter identification in partial differential equations, where the domain of the (nonlinear) forward mapping is given by pointwise almost everywhere restrictions on the parameter. If such restrictions already yield a regularization of the ill-posed parameter identification problem, unwanted smoothing from the introduction of a Tikhonov penalty can be avoided. 

In this work, we consider as a first step admissible sets of the form
\begin{equation}\label{eq:Mrho}
    M_\rho := \setof{u \in U}{\norm{u}_{U} \leq \rho}, 
\end{equation}
where we are particularly interested in the case $U= L^\infty(\Omega)$ for a bounded domain $\Omega\subset\R^d$.
We then take as the \emph{quasi-solution} of $Au=y$ the minimizer $u_\rho \in M_\rho \subset U$ of 
\begin{equation}\label{eq:quasisolution} 
    \inf_{u \in M_\rho} \norm{Au - y}_Y,
\end{equation}
where the radius $\rho\geq 0$ plays the role of regularization parameter. Its proper choice is one of the main issues in showing that the method of quasi-solutions can be considered a (convergent) regularization method, and the characterization of such choice rules -- especially of a posteriori-type -- is one of the main contributions of this work. The main difficulty here is the fact that $U$ is a non-reflexive Banach space, which requires working in the weak* topology. This is addressed by considering \eqref{eq:quasisolution} as a projection problem in $Y$, allowing exploiting the strong topology of this space.
We also discuss the numerical solution of \eqref{eq:quasisolution} for the case that $A$ is the solution operator to an elliptic partial differential equation and $U= L^\infty(\Omega)$.

Let us remark on related literature. Quasi-solutions were introduced by Ivanov \cite{Iva62,Iva63,IvaVasTan02} as the stabilization of linear ill-posed problems in normed spaces through a compact restriction of the solution domain based on Tikhonov's theorem \cite{Tikhonov:1963}. 
Further works \cite{DomIva65,Iva69} weakened the requirement to weakly compact norm balls in uniformly convex Banach spaces; see also \cite{VaAg95,IvaVasTan02,Vas08}. More general compact solution domains of monotone or convex and bounded functions were considered in \cite{YaLeTi02,Yagola:2002}.
More recently, convergence rates for quasi-solutions to nonlinear inverse problems with norm balls in Hilbert scales were proved in \cite{NeuRam14}. 
However, the (a posteriori) parameter choice for the radius was not discussed. 
Finally, in parallel to this work, convergence and convergence rates for both Morozov and Ivanov regularization under variational source conditions were derived in a more abstract setting in \cite{Klassen:2018}.
In contrast to this, the current work also treats the numerical computation of quasi-solutions in a non-reflexive Banach space, which to the best of our knowledge has not been done so far. 

This paper is organized as follows. \Cref{sec:projection} collects results on metric projections onto image sets under $A$ that will be needed in the following. In \cref{sec:regularization}, we show that the method of quasi-solutions is indeed a regularization strategy, characterize parameter choice rules leading to a convergent regularization method, and derive convergence rates in a suitably chosen Bregman distance. \Cref{sec:solution} is concerned with the numerical solution of an inverse source problem for a partial differential equation with $U=L^\infty(\Omega)$ based on a semi-smooth Newton method and a line search for the a posteriori choice according to the discrepancy principle. Finally, \cref{sec:example} contains numerical examples illustrating the behavior of quasi-solutions in this setting.

\section{Quasi-solution as metric projection in image space}\label{sec:projection}

To avoid having to work with the weak or weak* topology in $U$, we consider the problem of finding a quasi-solution as a metric projection onto the range of $A$. While existence of a quasi-solution can also be shown by standard methods, the analysis in image space will be helpful for analyzing parameter choice rules.

For this purpose, we make the following assumption on the forward operator.
\begin{assumption}\label{ass:Op_closed}
    The linear operator $A:U\to Y$ 
    \begin{enumerate}[label=(\roman*)]
        \item is injective,
        \item has dense range,
        \item is bounded in the strong topologies of $U$ and $Y$,
        \item is weak*-to-strong closed, i.e., for any sequence $\{u_n\}_{n\in\N}$ with $u_n \convw u$ in $U$ and $Au_n \conv y$ in $Y$ we have that $Au = y$.
    \end{enumerate}
\end{assumption}
From the definition of $M_\rho$ and these assumptions on $A$, we immediately obtain the following useful property.
\begin{lemma}\label{lem:Qrho}
    For every $\rho\geq 0$, the image set $Q_\rho := A(M_\rho)\subset Y$ is non-empty, closed, convex, and bounded.
\end{lemma}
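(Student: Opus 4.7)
The plan is to check each of the four properties in turn, with the bulk of the work going into closedness. Non-emptiness is immediate since $0 \in M_\rho$ and therefore $0 = A0 \in Q_\rho$. Convexity follows because $M_\rho$ is a norm ball, hence convex, and $A$ is linear, so $Q_\rho = A(M_\rho)$ is convex. Boundedness follows from the (strong) boundedness of $A$ in \cref{ass:Op_closed}(iii): every $u \in M_\rho$ satisfies $\norm{Au}_Y \leq \norm{A}_{U\to Y}\norm{u}_U \leq \norm{A}_{U\to Y}\rho$.

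For closedness, I will take a sequence $\{y_n\}_{n\in\N} \subset Q_\rho$ with $y_n \conv y$ in $Y$ and produce some $u\in M_\rho$ with $Au=y$. Writing $y_n = Au_n$ with $u_n \in M_\rho$, the key observation is that $U$ is the dual of a separable normed space, so by the Banach--Alaoglu theorem the closed ball $M_\rho$ is weak*-compact and, due to separability of the predual, weak*-metrizable. Consequently $\{u_n\}$ admits a weak*-convergent subsequence $u_{n_k} \convw u$, and the weak*-lower-semicontinuity of the norm (which is the supremum over the predual unit ball) gives
\begin{equation*}
    \norm{u}_U \leq \liminf_{k\to\infty} \norm{u_{n_k}}_U \leq \rho,
\end{equation*}
so that $u \in M_\rho$.

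Finally, combining $u_{n_k} \convw u$ with $Au_{n_k} = y_{n_k} \conv y$ in $Y$, the weak*-to-strong closedness assumption \cref{ass:Op_closed}(iv) yields $Au = y$, hence $y \in Q_\rho$, completing the proof of closedness. The only step that requires any care is this last one: boundedness of the sequence in the norm of the non-reflexive space $U$ is not enough on its own to extract a useful limit, and it is precisely the combination of the separable predual (yielding weak*-sequential compactness of norm balls) with assumption (iv) that allows one to transfer strong convergence in $Y$ back into membership in $Q_\rho$.
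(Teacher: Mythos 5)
Your proof is correct and follows essentially the same route as the paper: the first three properties from linearity and boundedness of $A$ together with $M_\rho$ being a ball, and closedness via Banach--Alaoglu on the dual of a separable predual to extract a weak*-convergent subsequence in $M_\rho$, followed by \cref{ass:Op_closed}\,(iv). Your additional remarks on weak*-metrizability and weak* lower semicontinuity of the norm merely make explicit what the paper summarizes as the unit ball being weakly* sequentially closed.
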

\begin{proof} 
    Non-emptiness, convexity, and boundedness are obvious since $M_\rho$ is a ball and $A$ is linear and bounded. To see the closedness, let $\{y_n\}_{n\in\N}$ be a sequence in $Q_\rho$ converging strongly to some $y\in Y$. By definition of $Q_\rho$, there thus exists for every $n\in \N$ a $u_n\in M_\rho$ such that $Au_n = y_n$.
    Since $\{u_n\}_{n\in \N}\subset M_\rho$ is bounded and $U$ is the dual of a separable vector space, we can apply the Banach--Alaoglu theorem to extract a subsequence, still denoted by $\{u_n\}_{n\in\N}$, with $u_n \convw u\in M_\rho$ (since the unit ball in $U$ is weakly* sequentially closed). By the closedness \cref{ass:Op_closed}\,(iv) on $A$, we thus have $y = Au \in Q_\rho$.
\end{proof}
Since $Y$ was assumed to be a uniformly convex Banach space, we can apply the following result from \cite[Rem.~1.6.2, Thm.~1.6.4]{IvaVasTan02}.
\begin{proposition}\label{prop:projection}
    For every $y\in Y$, there exists a unique metric projection $y_\rho \in Q_\rho$ such that
    \begin{equation}\label{eq:proj_Qrho}
        \norm{y_\rho -y}_Y = \inf_{q\in Q_\rho} \norm{q-y}_Y.  
    \end{equation} 
    Furthermore, the mapping $P_{Q_\rho}:y\mapsto y_\rho$ is continuous.
\end{proposition}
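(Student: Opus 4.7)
The plan is to prove existence by a direct method in the weak topology of $Y$, uniqueness from strict convexity, and continuity by combining weak sequential compactness with the Kadets--Klee property of uniformly convex spaces.

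For existence, I would take a minimizing sequence $\{q_n\}_{n\in\N}\subset Q_\rho$ for \eqref{eq:proj_Qrho}. By \cref{lem:Qrho} the set $Q_\rho$ is bounded, and since $Y$ is uniformly convex and hence reflexive, I can extract a weakly convergent subsequence $q_{n_k}\rightharpoonup \bar q$. The convexity and strong closedness of $Q_\rho$ (again by \cref{lem:Qrho}) imply that $Q_\rho$ is weakly closed by Mazur's theorem, so $\bar q \in Q_\rho$. Weak lower semicontinuity of the norm then gives $\norm{\bar q - y}_Y \leq \liminf_k \norm{q_{n_k}-y}_Y = \inf_{q\in Q_\rho}\norm{q-y}_Y$, so $\bar q$ realizes the infimum.

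For uniqueness, suppose $y_\rho^1\neq y_\rho^2$ are both minimizers, both at distance $d:=\inf_{q\in Q_\rho}\norm{q-y}_Y>0$ (if $y\in Q_\rho$ the statement is trivial). The midpoint $\tfrac12(y_\rho^1+y_\rho^2)$ lies in $Q_\rho$ by convexity, while strict convexity of the norm (which is implied by uniform convexity) yields $\norm{\tfrac12(y_\rho^1+y_\rho^2)-y}_Y < d$, contradicting the definition of $d$.

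For continuity, let $y_n\to y$ strongly in $Y$ and write $y_\rho^n:=P_{Q_\rho}(y_n)$. Boundedness of $Q_\rho$ and reflexivity of $Y$ imply that every subsequence of $\{y_\rho^n\}_{n\in\N}$ admits a further weakly convergent subsequence $y_\rho^{n_k}\rightharpoonup\tilde y\in Q_\rho$. Passing to the limit in the minimizing inequality $\norm{y_\rho^{n_k}-y_{n_k}}_Y\leq\norm{q-y_{n_k}}_Y$ for arbitrary $q\in Q_\rho$ (weak lsc on the left, strong convergence on the right) shows $\tilde y = y_\rho$ by the uniqueness established above. To upgrade weak to strong convergence, I would sandwich $\norm{y_\rho^{n_k}-y}_Y$: the lower bound $\norm{y_\rho-y}_Y\leq\liminf_k \norm{y_\rho^{n_k}-y}_Y$ comes from weak lsc, while the upper bound
\begin{equation*}
    \norm{y_\rho^{n_k}-y}_Y \leq \norm{y_\rho^{n_k}-y_{n_k}}_Y + \norm{y_{n_k}-y}_Y \leq \norm{y_\rho - y_{n_k}}_Y + \norm{y_{n_k}-y}_Y \conv \norm{y_\rho - y}_Y
\end{equation*}
uses the minimality of $y_\rho^{n_k}$ against the admissible choice $q=y_\rho$. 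Hence $\norm{y_\rho^{n_k}-y}_Y\conv\norm{y_\rho-y}_Y$, and the Kadets--Klee property of uniformly convex spaces (weak convergence plus norm convergence implies strong convergence) yields $y_\rho^{n_k}\conv y_\rho$. A standard subsequence argument then gives convergence of the full sequence.

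The main obstacle is the final step: pure reflexivity suffices for existence and weak subsequential convergence in the continuity argument, but promoting this to strong convergence in the (potentially infinite-dimensional) space $Y$ genuinely requires the uniform convexity assumption via the Kadets--Klee property, which is why that hypothesis on $Y$ is placed in the setup.
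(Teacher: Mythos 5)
Your proof is correct. Note, however, that the paper does not actually prove this proposition: it is imported wholesale from the literature (the text preceding the statement cites \cite[Rem.~1.6.2, Thm.~1.6.4]{IvaVasTan02}), so there is no in-paper argument to compare against. What you supply is the standard self-contained proof that closed convex sets in uniformly convex spaces are Chebyshev sets with continuous metric projection: existence by the direct method (Milman--Pettis reflexivity, Mazur's theorem, weak lower semicontinuity of the norm), uniqueness from strict convexity applied to two points on the sphere of radius $d$ around $y$, and continuity by identifying weak subsequential limits via uniqueness and then upgrading to strong convergence through the sandwich $\norm{y_\rho - y}_Y \leq \liminf_k \norm{y_\rho^{n_k}-y}_Y \leq \limsup_k \norm{y_\rho^{n_k}-y}_Y \leq \norm{y_\rho-y}_Y$ together with the Kadets--Klee property. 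All steps are sound, and your closing observation correctly locates where uniform convexity (as opposed to mere reflexivity plus strict convexity) is genuinely needed, namely in the Radon--Riesz upgrade for continuity; this matters because metric projections onto closed convex sets in general reflexive strictly convex spaces need not be continuous. The only cosmetic remark is that in the uniqueness step the case $y\in Q_\rho$ gives $d=0$ and the minimizer is $y$ itself, which you correctly set aside as trivial.
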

By the definition of $Q_\rho$, this yields the existence of a $u_\rho\in M_\rho$ with $y_\rho=Au_\rho$ that attains the minimum in \eqref{eq:quasisolution}. Together with the injectivity of $A$, we thus immediately obtain the existence of a unique quasi-solution.
\begin{theorem}\label{thm:existence} 
    For every $y\in Y$ and $\rho\geq 0$ there exists a unique solution $u_\rho\in M_\rho$ to \eqref{eq:quasisolution}.
\end{theorem}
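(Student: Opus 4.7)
The plan is to obtain the theorem as an almost immediate corollary of \cref{prop:projection} together with \cref{lem:Qrho} and the injectivity of $A$. First I would invoke \cref{lem:Qrho} to ensure that $Q_\rho = A(M_\rho)$ is a non-empty, closed, convex, bounded subset of the uniformly convex space $Y$, which is exactly the setting in which \cref{prop:projection} guarantees the existence of a unique $y_\rho \in Q_\rho$ realizing $\inf_{q\in Q_\rho}\norm{q-y}_Y$.

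Next, since $y_\rho \in Q_\rho = A(M_\rho)$, by definition there exists some $u_\rho \in M_\rho$ with $Au_\rho = y_\rho$. For this $u_\rho$ I would then verify that
\begin{equation*}
    \norm{Au_\rho - y}_Y = \norm{y_\rho - y}_Y = \inf_{q \in Q_\rho} \norm{q - y}_Y = \inf_{u \in M_\rho} \norm{Au - y}_Y,
\end{equation*}
where the last equality uses the fact that $q$ ranges over $Q_\rho$ iff $q = Au$ for some $u \in M_\rho$. This shows that $u_\rho$ attains the minimum in \eqref{eq:quasisolution}, giving existence.

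For uniqueness, I would suppose that $\tilde u_\rho \in M_\rho$ is another minimizer of \eqref{eq:quasisolution}. Then $A\tilde u_\rho \in Q_\rho$ satisfies $\norm{A\tilde u_\rho - y}_Y = \inf_{q\in Q_\rho} \norm{q-y}_Y$, so by the uniqueness part of \cref{prop:projection} we must have $A\tilde u_\rho = y_\rho = Au_\rho$. The injectivity assumption \cref{ass:Op_closed}\,(i) then forces $\tilde u_\rho = u_\rho$, concluding the proof. There is no genuine obstacle here, since all the analytic work (weak*-compactness argument, projection onto a closed convex set in a uniformly convex space) has already been done in \cref{lem:Qrho} and \cref{prop:projection}; the only point to be careful about is to explicitly pass between minimization in $U$ over $M_\rho$ and minimization in $Y$ over $Q_\rho$, which is immediate from $Q_\rho = A(M_\rho)$.
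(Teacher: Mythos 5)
Your proposal is correct and follows exactly the paper's argument: the paper also obtains existence and uniqueness of $u_\rho$ by applying \cref{prop:projection} to the set $Q_\rho$ from \cref{lem:Qrho} and then pulling the unique metric projection $y_\rho$ back to $M_\rho$ via the injectivity of $A$. Your write-up merely spells out the identification $\inf_{q\in Q_\rho}\norm{q-y}_Y=\inf_{u\in M_\rho}\norm{Au-y}_Y$ and the uniqueness step, which the paper leaves implicit.
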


We can therefore introduce the \emph{distance function}
\begin{equation}\label{eq:distancefun}
    d:[0,\infty) \times Y \to [0,\infty) \qquad (\rho,y) \mapsto \norm{Au_{\rho} - y}_Y = \min_{u \in M_{\rho}} \norm{Au-y}_Y.     
\end{equation}
The following property of the distance function will be crucial in the following. It was first proved in \cite{DomIva65} for the case that $M_\rho$ is compact, and our extension to weakly* compact and convex subsets follows the line of their proof. (Here we point out that convex and compact sets are in general \emph{not} weakly* compact.)
\begin{proposition}\label{prop:distancefun} 
    For every $y\in Y$, the mapping $\rho \mapsto d(\rho,y)$ is surjective on 
    $(0,\norm{y}_Y)$. 
\end{proposition}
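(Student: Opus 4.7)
My plan is to apply the intermediate value theorem to $f(\rho) := d(\rho, y)$ on $[0,\infty)$. Three ingredients are needed: (i) $f(0) = \norm{y}_Y$; (ii) $\lim_{\rho\to\infty} f(\rho) = 0$; and (iii) $f$ is continuous. Once these are in place, the image $f([0,\infty))$ is a connected subset of $\R$ containing $\norm{y}_Y$ (attained at $\rho = 0$) and values arbitrarily close to $0$, hence it contains all of $(0,\norm{y}_Y)$, which is the claimed surjectivity.

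Ingredient (i) is immediate from $M_0 = \{0\}$. For (ii), the density of $R(A)$ from \cref{ass:Op_closed}\,(ii) supplies, for any $\eps > 0$, some $u \in U$ with $\norm{Au - y}_Y < \eps$; then $u \in M_\rho$ for every $\rho \geq \norm{u}_U$, so $f(\rho) \leq \eps$ for all such $\rho$. Monotonicity $f(\rho_1) \geq f(\rho_2)$ whenever $\rho_1 \leq \rho_2$ is obvious from $M_{\rho_1} \subset M_{\rho_2}$ and is useful as a bookkeeping tool below.

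The only real work is (iii), which I handle by a scaling trick. For $\rho > 0$ and $\rho_n \to \rho$, set $\lambda_n := \rho_n/\rho$, so $\lambda_n \to 1$. Since $\norm{\lambda_n u_\rho}_U \leq \rho_n$ we have $\lambda_n u_\rho \in M_{\rho_n}$, and the triangle inequality gives
\[
    f(\rho_n) \;\leq\; \norm{\lambda_n A u_\rho - y}_Y \;\leq\; \lambda_n f(\rho) + |\lambda_n - 1|\,\norm{y}_Y,
\]
so $\limsup_n f(\rho_n) \leq f(\rho)$. Symmetrically, $\norm{(1/\lambda_n) u_{\rho_n}}_U \leq \rho$, so $(1/\lambda_n) u_{\rho_n} \in M_\rho$ and the same estimate yields $f(\rho) \leq (1/\lambda_n) f(\rho_n) + |1/\lambda_n - 1|\,\norm{y}_Y$; taking $\liminf$ gives $f(\rho) \leq \liminf_n f(\rho_n)$. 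Continuity at $\rho = 0$ follows even more directly from $\norm{Au}_Y \leq \norm{A}\rho_n$ for $u \in M_{\rho_n}$, which implies $\norm{y}_Y - \norm{A}\rho_n \leq f(\rho_n) \leq \norm{y}_Y$.

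The non-reflexivity of $U$ plays no role here, because the scaling trick bypasses any weak* subsequence argument and relies only on the linearity and boundedness of $A$ together with the fact that the balls $M_\rho$ rescale compatibly with $\rho$. The mild obstacle is setting up the one triangle-inequality estimate in both directions; thereafter the surjectivity is a mechanical consequence of the intermediate value theorem.
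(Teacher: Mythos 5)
Your proof is correct, and it takes a genuinely different route from the paper's. The paper fixes a target value $\sigma\in(0,\norm{y}_Y)$ and directly constructs a radius attaining it: it defines $\tilde\rho$ as the infimum of all $\rho$ with $Q_\rho\cap B_\sigma(y)\neq\varnothing$, shows the nested intersection $\bigcap_n S_n$ is non-empty via metric projections and weak compactness in the reflexive space $Y$, and then rules out $d(\tilde\rho,y)<\sigma$ by a scaling contradiction. You instead prove continuity of $\rho\mapsto d(\rho,y)$ first -- by the clean two-sided scaling estimate comparing $\lambda_n u_\rho\in M_{\rho_n}$ and $\lambda_n^{-1}u_{\rho_n}\in M_\rho$ -- and then obtain surjectivity from the intermediate value theorem together with $d(0,y)=\norm{y}_Y$ and $\inf_\rho d(\rho,y)=0$ (density of $R(A)$). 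This is more elementary: it avoids weak compactness entirely, works for all $y\in Y$, and in fact reverses the paper's logical order, since the paper only obtains continuity afterwards in \cref{lem:d_monotone} by combining surjectivity with strict monotonicity. The one thing your route does not deliver is the byproduct the paper extracts from its construction, namely \cref{cor:quasisol_boundary} ($\norm{u_\rho}_U=\rho$ whenever $y\notin Q_\rho$), which is used later for the necessity part of \cref{prop:necessary_sufficient} and for strict monotonicity; if your proof replaced the paper's, that corollary would need its own short argument (e.g., if $\norm{u_\rho}_U<\rho$ and $d(\rho,y)>0$, density of $R(A)$ gives $v$ with $\norm{Av-y}_Y<d(\rho,y)$, and a convex combination $u_\rho+t(v-u_\rho)$ staying in $M_\rho$ for small $t$ contradicts minimality).
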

\begin{proof}
    Let $y\in Y$ and $\sigma \in (0,\norm{y}_Y)$ be given. Since the range of $A$ is dense in $Y$, there exists a sufficiently large $\rho$ such that $Q_\rho\cap B_\sigma(y)$ is non-empty. The set of all such $\rho>0$ is thus non-empty and bounded from below and therefore admits an infimum
    \begin{equation*}
        \tilde{\rho} := \inf \bigl\{\rho>0: Q_\rho \cap B_\sigma(y) \neq\varnothing \bigr\}.
    \end{equation*}
    By the definition of the infimum, there thus exists a sequence $\{\rho_n\}_{n\in\mathbb N}\subset (0,\infty)$ with
    \begin{equation*}
        S_n := Q_{\rho_n} \cap B_\sigma(y) \neq \emptyset
    \end{equation*}
    and $\rho_n\to\tilde\rho$. Without loss of generality, we can assume that this sequence is monotonically decreasing, which implies that $Q_{\rho_{n+1}} \subseteq Q_{\rho_{n}}$ and hence that $S_{n+1} \subseteq S_{n}$. 

    We now show that the intersection $S:=\bigcap_{n\in\N} S_n$ is non-empty. First, since every $S_n\subset Y$ is bounded, convex, and closed (as the intersection of such sets) as well as non-empty, we can apply \cref{prop:projection} to obtain for each $n\in \N$ an $s_n := P_{S_n}(0)$. 
    Furthermore, the monotonicity of $\{\rho_n\}_{n\in\N}$ implies that  $\norm{s_n}_Y \leq \rho_1$. Hence, $\{s_n\}_{n\in\N}$ is bounded and, since uniformly convex Banach spaces are reflexive, thus contains a subsequence weakly converging to some $s\in Y$.
    As $S$ is the intersection of convex and closed sets and therefore weakly closed, we obtain that $s\in S$.

    We next show that $s\in Q_{\tilde\rho}$. First, by construction we have that $s\in S_n\subset Q_{\rho_n}$ for every $n\in\N$, i.e., there exists a $u\in M_{\rho_n}$ such that $Au=s$. (Since $A$ is injective, $u$ cannot depend on $n$.) We can thus pass to the limit to obtain that
    \begin{equation*}
        \norm{u}_U \leq \rho_n \to \tilde \rho.
    \end{equation*}
    In fact, we even have that $\norm{u}_U = \tilde\rho$, because if the inequality were strict, we could find a $\hat \rho<\tilde \rho$ such that $s\in Q_{\hat\rho}\cap B_{\sigma}(y)$, in contradiction to the definition of $\tilde \rho$. This shows the claim.

    Similarly, since $s\in S_n$ for all $n\in \N$, we have that $s\in B_\sigma(y)$. Assume now that $\norm{s-y}_Y<\sigma$, i.e., $s$ is an interior point of $B_\sigma(y)$. Then there exists for $h:=-s$ an $\eps>0$ small enough such that $\bar s:= s+\eps h=(1-\eps)s\in B_\sigma(y)$.    
    Since $A$ is linear, we have that $0\in Q_{\tilde\rho}$ and hence by \cref{lem:Qrho} also $\bar s = \eps 0 + (1-\eps)s \in Q_{\tilde\rho}$. 
    (Note that the assumption $0<\sigma<\norm{y}_Y$ implies that $0\notin B_\sigma(y)$ and thus in particular $s\neq 0$.) 
    This implies that there exists a $\bar u \in M_{\tilde \rho}$ such that $A\bar u = \bar s$. 
    But from the linearity and injectivity of $A$ it then follows that $\bar u = (1-\eps)u$ and hence that
    \begin{equation*}
        \norm{\bar u}_U = (1-\eps)\norm{u}_U = (1-\eps)\tilde \rho < \tilde\rho,
    \end{equation*}
    again in contradiction to the definition of $\tilde \rho$. This implies that $\norm{s-y}_Y=\sigma$.

    It remains to show that $\norm{s-y}_Y = \inf_{u\in M_{\tilde\rho}}\norm{Au-y}_Y$. For this, we argue as in the last step: If there exists a $\hat u\in M_{\tilde\rho}$ such that $\norm{A\hat u-y}_Y<\sigma$, we can again find a $\bar u \in M_{\tilde \rho}$ such that $A\bar u\in B_{\sigma}(y)$ but $\norm{\bar u}_U < \tilde \rho$ to contradict the definition of $\tilde \rho$. Hence,
    \begin{equation*}
        d(\tilde \rho,y) = \norm{s-y}_Y = \sigma
    \end{equation*}
    as claimed.
\end{proof} 

The proof of \cref{prop:distancefun} shows in particular that unless $d(\rho,y)=0$ -- i.e., $y\in Q_\rho \subset R(A)$ -- the unique quasi-solution $u_\rho\in M_\rho$ from \cref{thm:existence} always lies on the boundary of $M_\rho$, a result that was already shown in \cite[Prop.~2.2]{NeuRam14} in the case that $U$ and $Y$ are Hilbert spaces.
\begin{corollary}\label{cor:quasisol_boundary}
    For every $\rho\geq 0$ and $y\in Y\setminus Q_\rho$, the solution to \eqref{eq:quasisolution} satisfies $\norm{u_\rho}_U = \rho$.
\end{corollary}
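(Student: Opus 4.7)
My plan is to argue by contradiction. Assume that $y \in Y \setminus Q_\rho$ but $\|u_\rho\|_U < \rho$; I will then construct a competitor $u_\epsilon \in M_\rho$ with strictly smaller residual than $u_\rho$, which will contradict the optimality of $u_\rho$ guaranteed by \cref{thm:existence}.

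The first observation is that $y \notin Q_\rho$ forces $\sigma := \|A u_\rho - y\|_Y = d(\rho,y) > 0$. Using the density of the range of $A$ granted by \cref{ass:Op_closed}\,(ii), I can then pick some $u^{\ast} \in U$ with $\|A u^{\ast} - y\|_Y < \sigma$; note that I will not require $u^{\ast} \in M_\rho$.

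Next, for $\epsilon \in (0,1]$ I would form the convex combination $u_\epsilon := (1-\epsilon) u_\rho + \epsilon u^{\ast}$. The triangle inequality yields $\|u_\epsilon\|_U \leq (1-\epsilon)\|u_\rho\|_U + \epsilon \|u^{\ast}\|_U$, and since by the contradiction hypothesis $\|u_\rho\|_U < \rho$, there is a strictly positive slack, so for all sufficiently small $\epsilon > 0$ one has $\|u_\epsilon\|_U \leq \rho$, i.e., $u_\epsilon \in M_\rho$. Linearity of $A$ combined with the triangle inequality on the residual then gives $\|A u_\epsilon - y\|_Y \leq (1-\epsilon)\sigma + \epsilon\|A u^{\ast} - y\|_Y < \sigma$, which contradicts the fact that $u_\rho$ minimizes $\|Au-y\|_Y$ over $M_\rho$.

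I do not anticipate any serious obstacle: the argument is essentially the convex-combination perturbation used in the last step of the proof of \cref{prop:distancefun}, except that I perturb $u_\rho$ toward an approximate preimage of $y$ rather than toward $0$. The only point that requires some care is that $u^{\ast}$ will in general lie outside $M_\rho$, so one cannot simply take it as a competitor directly; this is precisely where the assumed strict interiority $\|u_\rho\|_U < \rho$ is used, as it provides the slack that keeps the convex combination admissible for small~$\epsilon$.
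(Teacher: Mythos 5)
Your proof is correct, and it takes a genuinely more direct route than the paper. The paper gives no standalone proof of \cref{cor:quasisol_boundary}: it reads the statement off the proof of \cref{prop:distancefun}, where the perturbation runs in the opposite direction — there one scales the candidate toward $0$, replacing $u$ by $(1-\eps)u$, which strictly decreases $\norm{u}_U$ while (by convexity of $Q_{\tilde\rho}$ and the assumed interiority of $Au$ in $B_\sigma(y)$) keeping the residual at most $\sigma$, and the contradiction is with the minimality of the infimal radius $\tilde\rho$. You instead perturb $u_\rho$ toward a near-preimage $u^{\ast}$ of $y$ supplied by the dense range, which strictly decreases the residual while the assumed slack $\norm{u_\rho}_U<\rho$ keeps the convex combination in $M_\rho$; the contradiction is with the optimality of $u_\rho$ from \cref{thm:existence}. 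Your version is preferable as a proof of the corollary as stated: it applies directly to an arbitrary $\rho$ without having to identify $\rho$ with the special radius $\tilde\rho$ constructed in \cref{prop:distancefun} (a step the paper leaves implicit), and it uses only existence of the minimizer and \cref{ass:Op_closed}\,(ii). In essence you are observing that an interior minimizer of the convex map $u\mapsto\norm{Au-y}_Y$ over $M_\rho$ would be a global minimizer over all of $U$, whose infimum is $0$ by density of the range, forcing $y\in Q_\rho$. What the paper's detour buys is economy elsewhere: the same scaling argument simultaneously yields the surjectivity of $\rho\mapsto d(\rho,y)$, which is what is actually needed for the discrepancy principle later on.
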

This result also implies a strict monotonicity of the distance function.
\begin{lemma}\label{lem:d_monotone}
    For every $y\in Y\setminus R(A)$, the mapping $\rho \mapsto d(\rho,y)$ is strictly monotonically decreasing and continuous.
\end{lemma}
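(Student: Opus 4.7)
The plan is to treat monotonicity and continuity separately, both reducing cleanly to results already established.

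For \emph{strict monotonicity}, non-strict monotonicity is automatic: $\rho_1 \leq \rho_2$ gives $M_{\rho_1} \subseteq M_{\rho_2}$, and infimizing over a larger feasible set can only decrease the value, so $d(\rho_2,y) \leq d(\rho_1,y)$. To rule out equality when $\rho_1<\rho_2$, I would argue by contradiction: then $u_{\rho_1}$ is feasible for the $\rho_2$-problem and attains its minimum value, so the uniqueness in \cref{thm:existence} forces $u_{\rho_1}=u_{\rho_2}$. Since $y \notin R(A) \supseteq Q_{\rho_2}$, \cref{cor:quasisol_boundary} then demands $\norm{u_{\rho_2}}_U = \rho_2$, contradicting $\norm{u_{\rho_1}}_U \leq \rho_1 < \rho_2$.

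For \emph{continuity}, I would combine this strict monotonicity with the surjectivity from \cref{prop:distancefun}. Being monotone, $d(\cdot,y)$ admits one-sided limits $\ell^-,\ell^+$ at every $\rho_0 \geq 0$, with $\ell^- \geq d(\rho_0,y) \geq \ell^+$ and $\ell^\pm \in [0,\norm{y}_Y]$ (the latter because $M_0 = \{0\}$ gives $d(0,y) = \norm{y}_Y$, while the density of $R(A)$ yields $d(\rho,y) \to 0$ as $\rho \to \infty$: for any $\eps>0$ pick $u \in U$ with $\norm{Au-y}_Y < \eps$, so $d(\rho,y) < \eps$ whenever $\rho \geq \norm{u}_U$). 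A jump $\ell^- > \ell^+$ would then leave the open interval $(\ell^+,\ell^-) \cap (0,\norm{y}_Y)$ -- which is non-degenerate under these bounds -- outside the image of $d(\cdot,y)$, except possibly at the single point $d(\rho_0,y)$, contradicting \cref{prop:distancefun}.

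I expect the strict monotonicity step to be the conceptual crux, since it is the only place where the hypothesis $y \notin R(A)$ enters and where \cref{cor:quasisol_boundary} is invoked. The continuity part is then a routine application of the general fact that a monotone function whose image is an interval cannot have jump discontinuities, with only minor bookkeeping needed to verify that any hypothetical jump interval meets $(0,\norm{y}_Y)$ non-trivially.
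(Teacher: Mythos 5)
Your proof is correct, and it follows the same overall skeleton as the paper's (non-strict monotonicity from $M_{\rho_1}\subseteq M_{\rho_2}$, strictness by contradiction via \cref{cor:quasisol_boundary}, continuity from monotonicity plus the surjectivity of \cref{prop:distancefun}), but the mechanism you use to reach the contradiction in the strict-monotonicity step is genuinely different. The paper assumes $d(\rho_1,y)=d(\rho_2,y)$, uses \cref{cor:quasisol_boundary} and injectivity to conclude $Au_{\rho_1}\neq Au_{\rho_2}$, and then invokes the uniform convexity of $Y$ directly: the midpoint $\bar u=\tfrac12(u_{\rho_1}+u_{\rho_2})\in M_{\rho_2}$ gives a strictly smaller residual than $d(\rho_2,y)$. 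You instead observe that $u_{\rho_1}$ is a minimizer of the $\rho_2$-problem, so the uniqueness in \cref{thm:existence} forces $u_{\rho_1}=u_{\rho_2}$, which clashes with $\norm{u_{\rho_2}}_U=\rho_2>\rho_1\geq\norm{u_{\rho_1}}_U$ from \cref{cor:quasisol_boundary}. Your route is more economical in that it recycles the already-established uniqueness (which itself encapsulates the uniform convexity via \cref{prop:projection}) rather than re-running a convexity argument; the paper's version is more self-contained at that point and makes the geometric reason for strictness explicit. For the continuity step you supply the bookkeeping the paper leaves implicit (one-sided limits of a monotone function, $d(0,y)=\norm{y}_Y$, and the check that a hypothetical jump interval meets $(0,\norm{y}_Y)$ nontrivially); this is all sound, and the excursion via density of $R(A)$ to show $d(\rho,y)\to 0$ is harmless but not actually needed, since $0\leq d(\rho,y)\leq d(0,y)=\norm{y}_Y$ already confines any jump interval to $[0,\norm{y}_Y]$.
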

\begin{proof}
    If $\rho_1 \leq \rho_2$, the definition of the distance function and the fact that $M_{\rho_1}\subset M_{\rho_2}$ immediately imply that
    \begin{equation*}
        d(\rho_2,y)=\inf_{u \in M_{\rho_2}} \norm{Au-y}_Y \leq \inf_{u \in M_{\rho_1}} \norm{Au-y}_Y = d(\rho_1,y).
    \end{equation*}
    Assume now that $\rho_1< \rho_2$ but $d(\rho_1,y)=d(\rho_2,y)$. By \cref{cor:quasisol_boundary}, we have that $\norm{u_{\rho_1}}_U=\rho_1 < \rho_2 = \norm{u_{\rho_2}}_U$ and in particular $u_{\rho_1}\neq u_{\rho_2}$. 
    But by injectivity of $A$, we have that $Au_{\rho_1} \neq Au_{\rho_2}$ as well, and thus the uniform convexity of $Y$ implies that $\bar u := \frac12(u_{\rho_1}+u_{\rho_2})\in M_{\rho_2}$ satisfies
    \begin{equation*}
        \norm{A\bar u -y} = \norm{\tfrac12(Au_{\rho_1} + Au_{\rho_2})-y}_Y < \frac12 \left(\norm{Au_{\rho_1} -y}_Y +\norm{Au_{\rho_2} -y}_Y \right) = d(\rho_2,y),
    \end{equation*}
    in contradiction to the definition of the distance function.

    Together with the surjectivity from \cref{prop:distancefun}, the strict monotonicity implies the continuity.
\end{proof}

Finally, we will also require the continuity of the distance function with respect to its second argument.
\begin{lemma}\label{lem:d_lipschitz}
    For each $\rho\geq 0$, the mapping $y\mapsto d(\rho,y)$ is non-expansive (i.e., Lipschitz continuous with constant $1$).
\end{lemma}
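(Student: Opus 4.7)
The plan is to prove this by the standard argument that a pointwise infimum of a family of $1$-Lipschitz functions (here indexed by $u \in M_\rho$) is itself $1$-Lipschitz. Concretely, I would fix $y_1, y_2 \in Y$ and $\rho \geq 0$, and aim to show the two-sided estimate $|d(\rho,y_1) - d(\rho,y_2)| \leq \norm{y_1-y_2}_Y$.

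For the main inequality, let $u_\rho^1 \in M_\rho$ denote the quasi-solution corresponding to $y_1$ (which exists by \cref{thm:existence}). Then, applying the triangle inequality in $Y$ and using that $u_\rho^1$ is feasible for the minimization defining $d(\rho,y_2)$, I would write
\begin{equation*}
    d(\rho,y_2) = \inf_{u \in M_\rho} \norm{Au - y_2}_Y \leq \norm{Au_\rho^1 - y_2}_Y \leq \norm{Au_\rho^1 - y_1}_Y + \norm{y_1-y_2}_Y = d(\rho,y_1) + \norm{y_1-y_2}_Y.
\end{equation*}
Swapping the roles of $y_1$ and $y_2$ yields the reverse bound, and combining the two gives non-expansiveness.

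I do not anticipate any obstacle: the argument uses nothing beyond the triangle inequality and the definition of $d$, and in particular it does not require existence or uniqueness of the quasi-solution (one could just as well take an infimizing sequence in $M_\rho$ for $y_1$ and pass to the limit). The uniform convexity of $Y$, the structural assumptions on $A$, and the properties of $M_\rho$ are not needed here.
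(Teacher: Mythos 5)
Your proof is correct and is essentially the same argument as the paper's: the paper phrases it in terms of the metric projections $q_i = P_{Q_\rho}y_i$ rather than the quasi-solutions $u_\rho^i$, but since $q_i = Au_\rho^i$ the two inequality chains are identical term by term. Your closing observation that the argument is really just ``an infimum of $1$-Lipschitz functions is $1$-Lipschitz'' and needs no existence of minimizers is a valid (minor) sharpening, not a different route.
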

\begin{proof} 
    Consider $y_1,y_2 \in Y$ and corresponding projections $q_1 := P_{Q_\rho}y_1$ and $q_2 := P_{Q_\rho}y_2$. The optimality property of the projections then imply that
    \begin{align*}
        d(\rho,y_1) &= \norm{q_1 - y_1}_Y \leq \norm{q_2 - y_1}_Y \leq \norm{q_2 - y_2}_Y + \norm{y_2 - y_1}_Y, \\
        d(\rho,y_2) &= \norm{q_2 - y_2}_Y \leq \norm{q_1-y_2}_Y \leq \norm{q_1 - y_1}_Y + \norm{y_1 - y_2}_Y,
    \end{align*}
    which together yield that 
    \begin{equation*}
        |d(\rho,y_1) - d(\rho,y_2)| \leq \norm{y_1 - y_2}_Y.
        \qedhere
    \end{equation*}
\end{proof}

\section{Regularization and parameter choice}\label{sec:regularization}

In this section, we show that the method of quasi-solutions is a regularization in the classical sense \cite{EngHanNeu96,SchKalHofKaz10} and leads -- combined with an appropriate parameter choice rule for $\rho$ -- to a convergent regularization method. Since in this case the solution domain $M_\rho$ is not compact and $U$ is not assumed to be reflexive, we can only expect weak* convergence.

\subsection{Stability and convergence}

To show that quasi-solutions are a regularization of the ill-posed operator equation $Au=y$, we first need to show that for each $\rho\geq 0$, the solution to \eqref{eq:quasisolution} depends continuously on $y\in Y$. We again exploit the equivalence of \eqref{eq:quasisolution} with the metric projection onto $Q_\rho:= A(M_\rho)$.
\begin{theorem}\label{thm:stability}
    Let $\rho\geq 0$ and $y\in Y$ be given. Then for every sequence $\{y_n\}_{n\in\N}\subset Y$ with $y_n\to y$, the corresponding sequence of solutions $\{u_n\}_{n\in\N}$ to \eqref{eq:quasisolution} for $y_n$ in place of $y$ converges weakly* to $u_\rho$.
\end{theorem}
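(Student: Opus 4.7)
The plan is to leverage the equivalence with the metric projection in $Y$ that was set up in \cref{sec:projection}, and then transfer the resulting strong convergence in $Y$ back to weak* convergence in $U$ via the Banach--Alaoglu theorem together with \cref{ass:Op_closed}\,(iv).

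First, I would note that by the definition of $u_n$ and $u_\rho$ as quasi-solutions and by \cref{prop:projection}, we have $Au_n = P_{Q_\rho}(y_n)$ and $Au_\rho = P_{Q_\rho}(y)$. Since $P_{Q_\rho}$ is continuous on $Y$ by \cref{prop:projection} and $y_n \to y$ in $Y$, this immediately yields the strong image-space convergence
\begin{equation*}
    Au_n \conv Au_\rho \quad \text{in } Y.
\end{equation*}

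Next I would transfer this back to $U$. The sequence $\{u_n\}_{n\in\N}$ is bounded in $U$ since each $u_n \in M_\rho$, and $U$ is the dual of a separable space, so Banach--Alaoglu gives a weak*-convergent subsequence $u_{n_k} \convw \tilde u$ with $\tilde u\in M_\rho$ (the ball $M_\rho$ being weak*-sequentially closed). Combining $u_{n_k}\convw \tilde u$ with $Au_{n_k}\conv Au_\rho$ and the weak*-to-strong closedness from \cref{ass:Op_closed}\,(iv) yields $A\tilde u = Au_\rho$, and then injectivity of $A$ (\cref{ass:Op_closed}\,(i)) forces $\tilde u = u_\rho$.

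Finally, I would upgrade subsequential convergence to convergence of the full sequence by the standard subsequence principle: every subsequence of $\{u_n\}$ is bounded and hence, by the argument above, admits a further subsequence converging weak* to the same limit $u_\rho$. Consequently the entire sequence satisfies $u_n \convw u_\rho$. I do not expect any serious obstacle here; the only point that needs care is that the weak* topology on bounded sets of $U$ is metrizable (guaranteed by separability of the predual), so that the subsequence-of-every-subsequence argument indeed yields convergence of the whole sequence in the weak* topology.
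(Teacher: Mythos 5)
Your proposal is correct and follows essentially the same route as the paper's proof: identify $Au_n$ with the metric projection $P_{Q_\rho}(y_n)$, use continuity of the projection to get strong convergence in $Y$, extract a weak* convergent subsequence via Banach--Alaoglu, identify the limit through the weak*-to-strong closedness and uniqueness of the quasi-solution, and conclude by a subsequence--subsequence argument. (Your added remark on metrizability is a harmless extra precaution; the subsequence principle already holds in arbitrary topological spaces.)
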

\begin{proof} 
    First, \cref{prop:projection} yields for every $n\in\N$ a unique metric projection $q_n\in Q_\rho$ and hence, by definition of $Q_\rho$, a $u_n\in M_\rho$ such that $Au_n = q_n$. Furthermore, by continuity of the metric projection, we have that $q_n\to q:= P_{Q_\rho} y$. As in the proof of \cref{lem:Qrho}, we can now extract a subsequence, still denoted by $\{u_n\}_{n\in\N}$, such that $u_n \convw u\in M_\rho$ with $Au=q$, i.e., $u\in M_\rho$ minimizes $\norm{Au-y}_Y$ over $M_\rho$. This shows that $u=u_\rho$ is the solution to \eqref{eq:quasisolution}. Since this solution is unique by \cref{thm:existence}, a subsequence--subsequence argument yields weak* convergence of the full sequence.
\end{proof}

In the case of Ivanov regularization, convergence as $\rho\to\infty$ for fixed $y\in R(A)$ is obvious since for every $\rho\geq \norm{u^\dag}_U$ with $Au^\dag = y$, the choice $u_\rho = u^\dag\in M_\rho$ clearly solves \eqref{eq:quasisolution}. Nevertheless, we state the consequence as a theorem for the sake of completeness.
\begin{theorem}\label{thm:approximation}
    Let $y\in R(A)$ be given and $u^\dag\in U$ such that $Au^\dag = y$. Then for every sequence $\{\rho_n\}_{n\in\N}$ with $\rho_n\to \infty$, we have $u_{\rho_n}\to u^\dag$.
\end{theorem}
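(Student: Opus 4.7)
The plan is to make rigorous the observation, already sketched in the paragraph preceding the theorem, that eventually $u_{\rho_n}$ actually equals $u^\dagger$. Concretely, since $A$ is injective, the preimage $u^\dagger$ of $y$ under $A$ is unique, and it belongs to the admissible set $M_\rho$ precisely when $\rho \geq \norm{u^\dagger}_U$. Because $\rho_n \to \infty$, there exists an index $N$ such that $\rho_n \geq \norm{u^\dagger}_U$ for all $n \geq N$, and hence $u^\dagger \in M_{\rho_n}$ from that point onward.

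For such $n$, the feasible point $u^\dagger$ achieves zero residual $\norm{Au^\dagger - y}_Y = 0$, which is clearly the minimum possible value of the objective in \eqref{eq:quasisolution}. Thus $u^\dagger$ is a quasi-solution in $M_{\rho_n}$, and by the uniqueness of the quasi-solution established in \cref{thm:existence}, we must have $u_{\rho_n} = u^\dagger$ for all $n \geq N$. Consequently $u_{\rho_n} \to u^\dagger$ in the norm topology of $U$ (in fact, with equality for all $n \geq N$), which in particular implies weak* convergence.

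There is essentially no obstacle here: the injectivity of $A$ from \cref{ass:Op_closed}\,(i) removes any ambiguity about which preimage to compare with, and the definition of $M_\rho$ as a norm ball makes the inclusion $u^\dagger \in M_\rho$ a direct consequence of $\rho \geq \norm{u^\dagger}_U$. The only thing one has to be careful not to overclaim is that this conclusion is specific to the noise-free case $y \in R(A)$; without injectivity, one would instead have to argue via the analysis of \cref{thm:stability} that some subsequence converges weakly* to a preimage of $y$, but that extra work is unnecessary under the standing assumptions.
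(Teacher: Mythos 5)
Your proof is correct and is essentially the same argument the paper gives (in the sentence immediately preceding the theorem): for $\rho_n\geq\norm{u^\dag}_U$ the exact solution $u^\dag$ is feasible, attains zero residual, and is the unique quasi-solution by \cref{thm:existence}, so $u_{\rho_n}=u^\dag$ eventually. Nothing further is needed.
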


\subsection{Parameter choice}

We now address the convergence of Ivanov regularization in combination with a parameter choice strategies for $\rho$. As we will see, there are fundamental differences to Tikhonov (and Morozov) regularization, and hence the results of this section are one of the main contributions for this work.

We begin by characterizing parameter choices that lead to a convergent regularization method. In the following, let $y\in R(A)$ be arbitrary, $\{\delta_n\}_{n\in\N}$ be a non-negative sequence with $\delta_n \to 0$, and $y^{\delta_n} \in Y$ with $\norm{y^{\delta_n}-y}_Y\leq \delta_n$ for each $n\in \N$. Let further $u^\dag\in U$ denote the (unique) solution to $Au=y$ and for given $\rho_n\geq 0$, let $u_{\rho_n}^{\delta_n}\in U$ denote the solution to 
\begin{equation}
    \min_{u\in M_{\rho_n}} \norm{Au-y^{\delta_n}}_Y.
\end{equation}
\begin{proposition}\label{prop:necessary_sufficient}
    If $A$ is not continuously invertible, the sequence $\{u_{\rho_n}^{\delta_n}\}_{n\in\N}$ weakly* converges to $u^\dag$ as $n\to\infty$ if and only if the following two conditions hold:
    \begin{enumerate}[label=(\roman*)]
        \item $\{\rho_n\}_{n\in\N}$ is bounded,
        \item $\liminf\limits_{n\to\infty} \rho_n \geq \norm{u^\dag}_U$.
    \end{enumerate}
\end{proposition}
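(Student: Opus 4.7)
The plan is to prove the two implications separately, under the standing interpretation that \emph{the parameter choice is convergent} means the weak* convergence must hold for \emph{every} admissible data sequence with $\norm{y^{\delta_n}-y}_Y \leq \delta_n$, so that conditions (i) and (ii) must be necessary and sufficient for this universal statement.

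\textbf{Necessity.} Condition (ii) can be tested already with the trivial data $y^{\delta_n} := y$. If $\liminf_{n\to\infty} \rho_n < \norm{u^\dag}_U$ along some subsequence, then $\norm{u_{\rho_n}^{\delta_n}}_U \leq \rho_n$ forces $\liminf \norm{u_{\rho_n}^{\delta_n}}_U < \norm{u^\dag}_U$, which contradicts the weak* lower semicontinuity of the norm on the dual space $U$. For (i), I would argue by contradiction: assume $\rho_{n_k}\to\infty$ along a subsequence. Since $A$ is not continuously invertible, I can pick $z_k\in U$ with $\norm{z_k}_U=1$ and $\norm{Az_k}_Y$ as small as I wish; thinning, I arrange $\norm{Az_k}_Y \leq \delta_{n_k}/(\rho_{n_k}-\norm{u^\dag}_U)$. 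Set $v_k := (\rho_{n_k}-\norm{u^\dag}_U)z_k$ and define $y^{\delta_{n_k}} := y + Av_k$ along the subsequence (and $y^{\delta_n}:=y$ otherwise); this is admissible by construction. Then $u^\dag+v_k\in M_{\rho_{n_k}}$ solves $Au=y^{\delta_{n_k}}$ exactly, so injectivity of $A$ identifies $u^{\delta_{n_k}}_{\rho_{n_k}}=u^\dag+v_k$. Its norm is at least $\rho_{n_k}-2\norm{u^\dag}_U\to\infty$, contradicting boundedness of any weak* convergent sequence.

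\textbf{Sufficiency.} Assume (i) and (ii). The uniform bound $\norm{u_{\rho_n}^{\delta_n}}_U\leq \rho_n$ combined with (i) gives boundedness in $U$, so by Banach--Alaoglu every subsequence of $\{u_{\rho_n}^{\delta_n}\}$ has a weak* cluster point $u^*\in U$, and by a subsequence--subsequence argument it suffices to show $u^*=u^\dag$. The core step is to establish $Au_{\rho_n}^{\delta_n}\to y$ strongly in $Y$. Given $\eps\in(0,1)$, condition (ii) gives $\rho_n > (1-\eps)\norm{u^\dag}_U$ for all large $n$, hence $(1-\eps)u^\dag\in M_{\rho_n}$, and comparison with this feasible element yields
\[
    d(\rho_n, y^{\delta_n}) \leq \norm{(1-\eps)Au^\dag - y^{\delta_n}}_Y \leq \eps\norm{y}_Y + \delta_n.
\]
Sending $n\to\infty$ and then $\eps\to 0$ gives $d(\rho_n,y^{\delta_n})\to 0$, hence $\norm{Au_{\rho_n}^{\delta_n}-y}_Y \leq d(\rho_n,y^{\delta_n})+\delta_n \to 0$. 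The weak*-to-strong closedness assumption \cref{ass:Op_closed}\,(iv) then yields $Au^*=y$, and injectivity of $A$ forces $u^*=u^\dag$.

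The delicate step is the necessity of (i): the qualitative hypothesis that $A^{-1}$ is unbounded on $R(A)$ only produces $z_k$ with $\norm{Az_k}_Y\to 0$ at an unknown rate, so one has to diagonally thin against both $\{\delta_{n_k}\}$ and the possibly rapidly diverging $\{\rho_{n_k}\}$ to manufacture admissible noise whose Ivanov preimage has norm of order $\rho_{n_k}$. The remainder combines in a standard way boundedness of weak* convergent sequences, weak* lower semicontinuity of $\norm{\cdot}_U$, the feasibility argument with $(1-\eps)u^\dag$, and the closedness hypothesis \cref{ass:Op_closed}\,(iv).
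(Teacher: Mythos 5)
Your proof is correct and, once one accepts the reading (implicit in the paper, explicit in your preamble) that convergence must hold for every admissible data sequence, it establishes exactly the stated equivalence; the route differs from the paper's in both directions. For sufficiency, the paper passes to a further subsequence $\rho_n\to\hat\rho\geq\norm{u^\dag}_U$ and works with the metric projections, chaining $\norm{P_{Q_{\rho_n}}y^{\delta_n}-y}_Y\leq\norm{P_{Q_{\rho_n}}y-y}_Y+2\delta_n$ and the continuity of $\rho\mapsto d(\rho,y)$; your direct feasibility comparison with $(1-\eps)u^\dag$ reaches the same conclusion $Au_{\rho_n}^{\delta_n}\to y$ more elementarily, avoids the extra subsequence extraction, and sidesteps the fact that \cref{lem:d_monotone} is only stated for $y\notin R(A)$ while here $y\in R(A)$ -- a small advantage of your version. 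For necessity, the paper picks for each $n$ some $y^{\delta_n}\in B_{\delta_n}(y)\setminus R(A)$ (possible since a dense, non-closed range has empty interior) and then gets $\norm{u_{\rho_n}^{\delta_n}}_U=\rho_n$ exactly from \cref{cor:quasisol_boundary}, which disposes of (i) and (ii) simultaneously; your construction $y^{\delta_{n_k}}=y+Av_k$ with $\norm{v_k}_U=\rho_{n_k}-\norm{u^\dag}_U$ and $\norm{Av_k}_Y\leq\delta_{n_k}$, built from the unboundedness of $A^{-1}$, is more hands-on but equally valid (the exact identification $u_{\rho_{n_k}}^{\delta_{n_k}}=u^\dag+v_k$ via injectivity is sound), and your test of (ii) with exact data using only $\norm{u_{\rho_n}^{\delta_n}}_U\leq\rho_n$ is if anything simpler than the paper's. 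The only caveat -- shared with the paper's own argument, so not a defect relative to it -- is that both adversarial constructions tacitly require $\delta_n>0$: with $\delta_n\equiv 0$ and $\rho_n\to\infty$ one has $u_{\rho_n}^{\delta_n}=u^\dag$ eventually, so (i) is not genuinely necessary for exact data.
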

\begin{proof} 
    We first show that these conditions are sufficient by proceeding similarly to the proof of \cref{thm:stability}. 
    First, condition (i) implies that $\{u_{\rho_n}^{\delta_n}\}_{n\in\N}$ is bounded, and hence we can extract a subsequence $u_n\convw u$ for some $u\in U$. By passing to a further subsequence, we can in addition assume that $\rho_n \to \hat\rho \geq \norm{u^\dag}_U$ by condition (ii). This implies that
    \begin{equation}
        \norm{P_{Q_{\rho_n}}y-y}_Y = d(\rho_n,y) \to d(\hat\rho,y) = 0,
    \end{equation}
    where we have used the continuity of the distance function from \cref{lem:d_lipschitz} and the fact that $Au^\dag = y\in Q_{\hat \rho}$. Furthermore, for every such $n$, \cref{prop:projection} yields the existence of a unique metric projection $P_{Q_{\rho_n}} y^{\delta_n} = Au_{\rho_n}^{\delta_n}$ which by definition satisfies
    \begin{equation*}
        \norm{P_{Q_{\rho_n}}y^{\delta_n} - y^{\delta_n}}_Y = \inf_{q \in Q_{\rho_n}} \norm{q - y^{\delta_n}}_Y \leq \norm{P_{Q_{\rho_n}}y - y^{\delta_n}}_Y.
    \end{equation*}
    Combining the above, we obtain that
    \begin{equation*}
        \begin{aligned}
            \norm{P_{Q_{\rho_n}}y^{\delta_n} - y}_Y &\leq \norm{P_{Q_{\rho_n}}y^{\delta_n} - y^{\delta_n}}_Y + \norm{y^{\delta_n} - y}_Y\\
                                                    &\leq \norm{P_{Q_{\rho_n}}y - y^{\delta_n}}_Y + \delta_n \\
                                                    &\leq \norm{P_{Q_{\rho_n}}y - y}_Y + 2\delta_n.
        \end{aligned}
    \end{equation*}
    Passing to the limit then yields that $P_{Q_{\rho_n}}y^{\delta_n} \to y$ and hence by the closedness that $Au=y$, i.e., $u=u^\dag$. Convergence of the full sequence now follows again from a subsequence--subsequence argument.

    To show the necessity, first note that by assumption $R(A)$ is not closed and hence has no interior points (otherwise the algebraic interior of $R(A)$ would not be empty either, and the linearity of $A$ would lead to the contradiction $R(A)=Y$.) For any $n\in\N$, we can thus find a $y^{\delta_n}\in B_{\delta_n}(y) \setminus R(A)$. \Cref{cor:quasisol_boundary} then implies that the corresponding quasi-solution satisfies $\norm{u_{\rho_n}^{\delta_n}}_U = \rho_n$. Hence, if $\{\rho_n\}_{n\in\N}$ is unbounded, $\{u_{\rho_n}^{\delta_n}\}_{n\in\N}$ is unbounded as well and therefore cannot weakly* converge (since weakly* convergent sequences are bounded by the Banach--Steinhaus Theorem). On the other hand, if $u_{\rho_n}^{\delta_n}\convw u^\dag$, then the weak* lower semi-continuity of the norm immediately implies that
    \begin{equation*}
        \norm{u^\dag}_U \leq \liminf_{n\to\infty} \norm{u_{\rho_n}^{\delta_n}}_U = \liminf_{n\to\infty} \rho_n.
        \qedhere
    \end{equation*}
\end{proof} 

\Cref{prop:necessary_sufficient} shows in particular that Ivanov regularization converges under the constant parameter choice $\rho_n \equiv \norm{u^\dag}_U$.
However, this is not a valid parameter choice in the strict sense, which may only depend on $\delta$ and $y^\delta$, but not on the exact solution $u^\dag$. In fact, \cref{prop:necessary_sufficient} implies that there can be no convergent a priori parameter choice $\rho = \rho(\delta)$ unless the inverse problem is well-posed.
\begin{corollary}\label{thm:a-priori}
    Let $\rho(\delta)$ be an a priori parameter choice. If $u_{\rho_n}^{\delta_n}\convw u^\dag$ for $\rho_n = \rho(\delta_n)$ with $\delta_n\to 0$ as $n\to\infty$, then $A$ is continuously invertible.
\end{corollary}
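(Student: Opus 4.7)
The plan is to argue by contraposition: suppose $A$ is not continuously invertible, and show that then no a priori rule $\rho(\delta)$ can render Ivanov regularization convergent. The key structural observation is that because the rule depends only on the noise level, the sequence $\rho_n := \rho(\delta_n)$ is completely determined once $\{\delta_n\}_{n\in\N}$ is chosen, independently of the unknown solution $u^\dag$; this rigidity is what the proof exploits against \cref{prop:necessary_sufficient}.

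Concretely, I would fix any sequence $\delta_n \to 0$ and, for the corresponding $\rho_n = \rho(\delta_n)$, apply \cref{prop:necessary_sufficient} under the standing non-invertibility hypothesis. Interpreting ``convergent regularization method'' in the usual sense (convergence holds for every $u^\dag \in U$ with $Au^\dag = y$ and every admissible sequence $\{y^{\delta_n}\}_{n\in\N}$), that proposition forces both the boundedness of $\{\rho_n\}_{n\in\N}$ and the inequality $\liminf_{n\to\infty} \rho_n \geq \norm{u^\dag}_U$, the latter to be valid for every admissible $u^\dag$.

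The contradiction is then immediate: $\liminf_{n\to\infty} \rho_n$ is a fixed number depending only on $\{\delta_n\}_{n\in\N}$, whereas $\norm{u^\dag}_U$ can be made arbitrarily large by varying $u^\dag \in U$ (as $U$ is non-trivial). Hence the necessary condition (ii) forces $\liminf_{n\to\infty}\rho_n = \infty$, contradicting the boundedness from (i). The main obstacle is conceptual rather than technical: one has to parse the hypothesis as demanding convergence uniformly over all admissible exact solutions, so that the necessary condition of \cref{prop:necessary_sufficient} can be probed against $u^\dag$ of arbitrarily large norm; once that quantification is made explicit, the argument reduces to a one-line consequence of the previous proposition.
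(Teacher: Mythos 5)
Your proposal is correct and is essentially the paper's own argument: both exploit that an a priori rule fixes $\{\rho_n\}_{n\in\N}$ independently of $u^\dag$, so the necessary condition $\liminf_{n\to\infty}\rho_n \geq \norm{u^\dag}_U$ from \cref{prop:necessary_sufficient}, required for every admissible $u^\dag$, forces unboundedness of $\{\rho_n\}_{n\in\N}$ and contradicts condition (i). Your version merely makes explicit the quantification over $u^\dag$ that the paper compresses into ``the claim must hold independently of $u^\dag$.''
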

\begin{proof}
    Since the claim must hold independently of $u^\dag$, condition (ii) of \cref{prop:necessary_sufficient} has to be satisfied for any $u^\dag\in U$. This implies a fortiori that $\{\rho_n\}_{n\in\N}$ is unbounded, in contradiction to condition (i). Hence, $A$ must be continuously invertible.
\end{proof}
Of course, under the additional a priori information $u^\dag \in M_{\rho^\dag}$ for some $\rho^\dag\geq 0$, the choice $\rho_n\equiv \rho^\dag$ leads to a convergent regularization method (cf.~\cite{NeuRam14}).

This leaves a posteriori choice rules $\rho=\rho(\delta,y^\delta)$. Here we consider the Morozov discrepancy principle \cite{Mor84}, which for some $\tau >1$ chooses $\rho(\delta,y^\delta)$ such that
\begin{equation}\label{eq:discrepancy}
    \delta \leq \norm{Au_{\rho(\delta,y^\delta)}^\delta - y^\delta}_Y \leq \tau\delta.
\end{equation}
This choice indeed leads to a convergent method.
\begin{theorem}\label{thm:morozov}    
    Let $\rho_n:=\rho(\delta_n,y^{\delta_n})$ be chosen according to \eqref{eq:discrepancy} for some $\tau>1$. Then $u_{\rho_n}^{\delta_n}\convw u^\dag$ as $n\to\infty$.
\end{theorem}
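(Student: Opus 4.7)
The plan is to apply \cref{prop:necessary_sufficient}, whose sufficiency direction in the proof does not actually use the assumption that $A$ fails to be continuously invertible, so it suffices to verify that $\{\rho_n\}_{n\in\N}$ is bounded and that $\liminf_{n\to\infty}\rho_n \geq \norm{u^\dag}_U$.

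For the boundedness I would combine the lower half of the discrepancy principle with the monotonicity from \cref{lem:d_monotone} and the estimate $d(\norm{u^\dag}_U,\cdot) \leq \norm{Au^\dag - \cdot}_Y$ furnished by admissibility of $u^\dag$. Concretely,
\begin{equation*}
    d(\norm{u^\dag}_U, y^{\delta_n}) \leq \norm{Au^\dag - y^{\delta_n}}_Y = \norm{y - y^{\delta_n}}_Y \leq \delta_n \leq d(\rho_n, y^{\delta_n}),
\end{equation*}
and the monotonicity of $d(\cdot, y^{\delta_n})$ in its first argument then forces $\rho_n \leq \norm{u^\dag}_U$. The strict form of the monotonicity from \cref{lem:d_monotone} covers the generic case $y^{\delta_n} \notin R(A)$; the borderline possibility of equality across the chain, which can only occur when $\norm{y-y^{\delta_n}}_Y = \delta_n$, is ruled out by noting that it would force the metric projection of $y^{\delta_n}$ onto $Q_{\rho_n}$ to equal $y = Au^\dag$, so that injectivity gives $u_{\rho_n}^{\delta_n} = u^\dag$, and then \cref{cor:quasisol_boundary} yields $\rho_n = \norm{u^\dag}_U$.

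For the $\liminf$ bound I would use the upper half of the discrepancy together with \cref{lem:d_lipschitz} to obtain
\begin{equation*}
    d(\rho_n, y) \leq d(\rho_n, y^{\delta_n}) + \norm{y^{\delta_n} - y}_Y \leq (\tau+1)\delta_n \conv 0.
\end{equation*}
If some subsequence satisfied $\rho_{n_k}\to\rho^* < \norm{u^\dag}_U$, I would fix $\eta \in (\rho^*, \norm{u^\dag}_U)$ so that $\rho_{n_k}\leq \eta$ for all sufficiently large $k$; monotonicity of $d(\cdot, y)$ would then give $d(\rho_{n_k}, y) \geq d(\eta, y)$, and $d(\eta, y) > 0$ since $d(\eta, y) = 0$ would place $y$ into $Q_\eta$ and, by injectivity of $A$, force $\norm{u^\dag}_U \leq \eta$, contradicting the choice of $\eta$. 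This contradicts $d(\rho_{n_k}, y) \to 0$.

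Conditions (i) and (ii) of \cref{prop:necessary_sufficient} then deliver $u_{\rho_n}^{\delta_n} \convw u^\dag$. I expect the main subtlety to be the boundedness step: in the equality case $\norm{y - y^{\delta_n}}_Y = \delta_n$ the weak monotonicity alone does not exclude $\rho_n$ slightly exceeding $\norm{u^\dag}_U$, which is why uniqueness of the metric projection in the uniformly convex space $Y$ (\cref{prop:projection}) must be invoked to close the argument.
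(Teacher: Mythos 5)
Your proposal is correct and follows the same overall strategy as the paper: reduce the theorem to the sufficiency direction of \cref{prop:necessary_sufficient} (and you are right that this direction does not use the non-invertibility hypothesis) and then verify conditions (i) and (ii) using the two halves of the discrepancy inequality. Your argument for condition (ii) is essentially identical to the paper's, which writes $\tau\delta_n \geq d(\rho_n,y^{\delta_n}) \geq d(\tilde\rho,y^{\delta_n}) \to d(\tilde\rho,y)>0$ via \cref{lem:d_monotone,lem:d_lipschitz}. The one place where you genuinely diverge is the boundedness step. The paper argues directly in $U$: if $\rho_n>\norm{u^\dag}_U$ then $u^\dag\in M_{\rho_n}$, while \cref{cor:quasisol_boundary} forces $\norm{u_{\rho_n}^{\delta_n}}_U=\rho_n$, so $u_{\rho_n}^{\delta_n}\neq u^\dag$ and uniqueness of the minimizer gives the strict inequality $\min_{u\in M_{\rho_n}}\norm{Au-y^{\delta_n}}_Y<\norm{Au^\dag-y^{\delta_n}}_Y\leq\delta_n$, contradicting the lower discrepancy bound in one stroke. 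You instead route the argument through the distance function and its monotonicity, which obliges you to handle the equality case separately (since \cref{lem:d_monotone} only gives strict monotonicity for data outside $R(A)$); your resolution of that borderline case via uniqueness of the metric projection in the uniformly convex space $Y$ and \cref{cor:quasisol_boundary} is sound, but it is doing by hand what the paper's uniqueness-of-minimizer argument accomplishes uniformly for all $y^{\delta_n}$ with $\delta_n>0$. The paper's version is thus slightly cleaner; yours has the minor virtue of staying entirely within the image-space formalism of \cref{sec:projection}. One small omission: the paper also records, via \cref{prop:distancefun}, that a $\rho_n$ satisfying \eqref{eq:discrepancy} actually exists for $n$ large enough, which you take for granted.
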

\begin{proof}
    We make use of the properties of the distance function \eqref{eq:distancefun} from \cref{sec:projection}. First note that for any $0<\delta_n<\norm{y^{\delta_n}}_Y$ (which holds for $n$ sufficiently large provided $\norm{y}_Y>0$), \cref{prop:distancefun} yields the existence of a $\rho_n$ satisfying \eqref{eq:discrepancy}. 
    It remains to verify the conditions of \cref{prop:necessary_sufficient}.

    We first show that $\{\rho_n\}_{n\in\N}$ is bounded by $\norm{u^\dag}_U$. Assume that there exists an $n\in \N$ such that $\rho_n > \norm{u^\dag}_U$, which implies that $u^\dag \in M_{\rho_n}$.
    Since $\norm{u_{\rho_n}^{\delta_n}}_U = \rho_n$ for $\delta_n>0$ by \cref{cor:quasisol_boundary}, we further have that $u_{\rho_n}^{\delta_n}\neq u^\dag$. Hence it follows from the definition  and uniqueness of the quasi-solution that
    \begin{equation*}
        \norm{Au_{\rho(\delta,y^\delta)}^\delta - y^\delta}_Y = \min_{u\in M_{\rho_n}}\norm{Au-y^{\delta_n}}_Y < \norm{Au^\dag -y^{\delta_n}}_Y \leq \delta_n,
    \end{equation*}
    in contradiction to the choice of $\rho_n$.

    Assume now that
    \begin{equation*}
        \liminf_{n \rightarrow \infty} \rho_n < \norm{u^\dag}_U=:\rho^\dag.
    \end{equation*}
    Since the inequality is strict, there exists -- after passing to a subsequence -- for any $\eps>0$ an $N \in \N$ such that $\rho_n < \tilde\rho:=\norm{u^\dag}_U - \eps$ for all $n > N$. From \cref{lem:d_monotone} we thus obtain that $d(\rho_n,y^{\delta_n}) > d(\tilde{\rho},y^{\delta_n})$ for all $n>N$.
    The parameter choice rule \eqref{eq:discrepancy} and \cref{lem:d_lipschitz} then imply that
    \begin{equation*}
        \lim_{n \to \infty} \tau\delta_n \geq \lim_{n \to \infty} d(\rho_n,y^{\delta_n}) \geq \lim_{n \to \infty} d(\tilde{\rho},y^{\delta_n}) = d(\tilde{\rho},y) > 0
    \end{equation*}
    since $u^\dag\notin M_{\tilde\rho}$, in contradiction to $\delta_n\to 0$.
\end{proof}
The proof of \cref{thm:morozov} in fact shows that for $\rho_n:=\rho(\delta_n,y^{\delta_n})$ chosen according to the discrepancy, we have $\rho_n\to \norm{u^\dag}_U$ from below.

\subsection{Convergence rates}

Since we have only shown weak* convergence of $u_\rho^\delta$ to $u^\dag$ as $\delta\to 0$, we cannot expect convergence rates for the strong error $\norm{u_\rho^\delta-u^\dag}_U$. As usual for inverse problems in Banach spaces, we thus consider rates for the error measured by the Bregman distance; see, e.g., \cite{BurOsh04,Res05,HoKaPoeSche07,Flemming:2010,Grasmair:2010}. We recall that for a convex functional $J:U\to \R\cup\{\infty\}$, the convex subdifferential of $J$ at $u\in U$ with $J(u)<\infty$ is given by
\begin{equation}
    \partial J(u) = \setof{\xi\in U^*}{\dual{\xi,v-u}_U \leq J(v)-J(u)\quad\text{for all }v\in U}.
\end{equation}
Note that the subdifferential can be empty unless $J(u)<\infty$ for all $u\in U$. For given $u\in U$ and $\xi \in \partial J(u)$, we can now define the Bregman distance
\begin{equation}
    D_J^\xi(v;u) = J(v)-J(u) - \dual{\xi,v-u}_U\qquad\text{for all }v\in U.
\end{equation}
By the definitions, we have $D_J^\xi(v;u) \geq0$ for any $v\in U$ as well $D_J^\xi(u; u)=0$, which justifies using the Bregman distance to measure the error. Here, we choose $J(u) := \norm{u}_U$ and point out that this choice is different from the classical one as suggested in \cite{BurOsh04}; to fit into their framework, we would have to take $J(u)=\delta_{M_\rho}(u)$ with $\delta_{M_\rho}(u) = 0$ if $u\in M_\rho$ and $\infty$ else, which however would make the Bregman distance uninformative for $v\notin M_\rho$.

Together with a classical source condition, we can then derive the expected convergence rate under the a posteriori choice rule \eqref{eq:discrepancy}.
\begin{proposition}\label{thm:conv_rates} 
    Assume there exists a $w \in Y^*$ with $\xi := A^*w\in \partial J(u^\dag)$. If $\rho=\rho(\delta,y^\delta)$ is chosen according to \eqref{eq:discrepancy}, then 
    \begin{equation}\label{eq:conv_rates}
        D_J^\xi(u_\rho^\delta;u^\dagger) \leq C\delta.
    \end{equation} 
\end{proposition}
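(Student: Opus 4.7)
The plan is to unpack the Bregman distance using the source condition and then exploit the two sides of the discrepancy principle to bound each piece linearly in $\delta$.

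First I would expand
\[
    D_J^\xi(u_\rho^\delta;u^\dag) = \norm{u_\rho^\delta}_U - \norm{u^\dag}_U - \dual{A^*w, u_\rho^\delta - u^\dag}_U = \norm{u_\rho^\delta}_U - \norm{u^\dag}_U - \dual{w, Au_\rho^\delta - y}_{Y},
\]
using $Au^\dag = y$ and the source condition $\xi = A^*w$. The task then reduces to bounding the two pieces separately.

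For the norm term, I would invoke the observation made at the end of the proof of \cref{thm:morozov}: the discrepancy principle forces $\rho_n \leq \norm{u^\dag}_U$ (otherwise $u^\dag \in M_{\rho_n}$ would yield a residual strictly below $\delta_n$, contradicting the lower bound in \eqref{eq:discrepancy}). Since $u_\rho^\delta \in M_\rho$, this gives $\norm{u_\rho^\delta}_U \leq \rho \leq \norm{u^\dag}_U$, so the first two terms together are non-positive. Hence
\[
    D_J^\xi(u_\rho^\delta;u^\dag) \leq -\dual{w, Au_\rho^\delta - y}_Y \leq \norm{w}_{Y^*}\, \norm{Au_\rho^\delta - y}_Y.
\]

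For the residual, I would split through $y^\delta$: by the upper bound in \eqref{eq:discrepancy} and $\norm{y^\delta - y}_Y \leq \delta$,
\[
    \norm{Au_\rho^\delta - y}_Y \leq \norm{Au_\rho^\delta - y^\delta}_Y + \norm{y^\delta - y}_Y \leq (\tau + 1)\delta.
\]
Combining the two estimates yields \eqref{eq:conv_rates} with $C = (\tau + 1)\norm{w}_{Y^*}$.

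I do not foresee a real obstacle here: the two nontrivial ingredients (existence of $\rho_n$ satisfying the discrepancy principle and the bound $\rho_n \leq \norm{u^\dag}_U$) have already been established in \cref{thm:morozov}, and with $J = \norm{\cdot}_U$ everywhere finite the subdifferential and Bregman distance require no further regularity discussion. The only subtle point worth flagging is that the whole argument hinges on the choice $J = \norm{\cdot}_U$ rather than the indicator $\delta_{M_\rho}$, because it is precisely the sign of $\norm{u_\rho^\delta}_U - \norm{u^\dag}_U$ (guaranteed by $\rho \leq \norm{u^\dag}_U$) that allows us to discard this difference.
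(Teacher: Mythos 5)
Your proposal is correct and follows essentially the same route as the paper: both discard the term $\norm{u_\rho^\delta}_U - \norm{u^\dag}_U \leq 0$ via the bound $\rho \leq \norm{u^\dag}_U$ established in the proof of \cref{thm:morozov} (together with \cref{cor:quasisol_boundary}), then estimate the remaining dual pairing by $\norm{w}_{Y^*}\norm{Au_\rho^\delta - y}_Y \leq \norm{w}_{Y^*}(\tau+1)\delta$ using the triangle inequality through $y^\delta$ and the discrepancy principle. The constant $C = (\tau+1)\norm{w}_{Y^*}$ matches the paper's.
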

\begin{proof}
    First note that the proof of \cref{thm:morozov} together with \cref{cor:quasisol_boundary} implies that
    \begin{equation*}
        \norm{u_\rho}_U=\rho \leq \norm{u^\dag}_U.
    \end{equation*}
    Hence, we can use the source condition together with the parameter choice rule to estimate
    \begin{equation*}
        \begin{aligned}[b]
            D^\xi_J(u_\rho;u^\dag) 
            & = \norm{u_\rho}_{U} - \norm{u^\dag}_{U}- \dual{A^*w, u_\rho - u^\dag}_{U}  \\
            & \leq \left|\dual{ A^*w , u_\rho - u^\dagger }_{U}\right| = \left|\dual{ w , A(u_\rho - u^\dagger)}_{Y}\right| \\
            & \leq \norm{w}_{Y^*} \norm{Au_\rho - Au^\dagger}_{Y}  \\
            & \leq \norm{w}_{Y^*} \left(\norm{Au_\rho - y^\delta}_Y + \norm{y^\delta - y}_Y \right)\\
            & \leq \norm{w}_{Y^*}(\tau + 1)\delta.
        \end{aligned}
        \qedhere
    \end{equation*} 
\end{proof}

\begin{remark}\label{rem:bregman}
    We end this section by remarking on the interpretation of the convergence in Bregman distance in our context. Recall that the subgradient $\xi$ in \eqref{eq:conv_rates} satisfies by definition.
    \begin{equation}\label{eq:subgrad}
        \xi \in \partial \norm{\cdot}_{L^\infty}(u^\dag) = \setof{\xi\in L^\infty(\Omega)^*}{\dual{ \xi , u^\dag }_{L^\infty(\Omega)} = \norm{u^\dag}_{L^\infty(\Omega)},
        \norm{\xi}_{L^\infty(\Omega)^*} = 1}.
    \end{equation}
    Assume now that the set $C:=\setof{x\in\Omega}{|u(x)|=\rho}$ is measurable. 
    In this case, it is straightforward to verify that one possible choice is
    $\xi \in  L^1(\Omega) \subset (L^\infty)^*$
    defined pointwise almost everywhere by
    \begin{equation}\label{eq:xi}
        \xi(x) :=
        \begin{cases}
            \phantom{-}|C|^{-1} & \text{if }  \setof{x \in \Omega}{u^\dag(x) = \rho} \\
            \phantom{-}0 & \text{if }  \setof{x \in \Omega}{|u^\dag(x)| < \rho} \\
            -|C|^{-1} & \text{if }  \setof{x \in \Omega}{u^\dag(x) = -\rho} 
        \end{cases},
    \end{equation}
    where $|C|$ denotes the Lebesgue measure of $C$.
    In this case, convergence in Bregman distance entails pointwise convergence $u^\delta_\rho(x)\to u^\dag(x)$ on the active set where $|u^\dag(x)| = \rho$. Of course, this choice does not satisfy the source condition $\xi\in R(A^*)\subset H^1(\Omega)$, so \cref{thm:conv_rates} does not imply a pointwise convergence on this set with rate $\mathcal{O}(\delta)$.
\end{remark}

\section{Numerical solution}\label{sec:solution}

We now address the numerical computation of the quasi-solution problem \eqref{eq:quasisolution} for the case $U=L^\infty(\Omega)$ and $Y=L^2(\Omega)$ for a bounded Lipschitz domain $\Omega\subset \R^d$, $d\in\{1,2,3\}$, using a combination of a semi-smooth Newton method and a backtracking procedure for the a posteriori parameter choice of $\rho$. As a model problem, we consider the identification of the source term $u\in L^\infty(\Omega)$ from observation of the state $y\in H^1(\Omega)$ given as the weak solution to
\begin{equation}\label{pde}
    \left\{
        \begin{aligned}
            -\Delta y + cy & = u \quad\text{in }\Omega, \\
            \partial_\nu y &= 0 \quad\text{on }\partial\Omega,
        \end{aligned}
    \right.
\end{equation}
for given $c> 0$. Setting now
\begin{equation*}
    A:L^\infty(\Omega)\to L^2(\Omega),\qquad u\mapsto y,
\end{equation*}
it is easy to see that $A$ is linear, injective, continuous, and weakly$*$ closed. It is well-known from PDE-constrained optimization that in this case $u_\rho$ is a solution to \eqref{eq:quasisolution} (which is known in that context as a \emph{bang-bang control problem}, see, e.g., \cite{Hinze:2012}) if and only if
\begin{equation}\label{eq:solution:projection}
    u_\rho = \proj_{M_\rho}(u_\rho - p_\rho),
\end{equation}
where 
\begin{equation*}
    \proj_{M_\rho}:L^2(\Omega)\to L^2(\Omega),\qquad
    [\proj_{M_\rho}(v)](x) =: 
    \begin{cases}
        \rho  & \text{if} \quad  v(x)    >  \rho \\
        v(x) & \text{if} \quad |v(x)| \leq \rho \\
        -\rho & \text{if} \quad  v(x)    < -\rho 
    \end{cases} 
\end{equation*}
and $p_\rho$ is the solution to the \emph{adjoint problem}
\begin{equation}\label{pde_adjoint}
    \left\{
        \begin{aligned}
            -\Delta p + cp &= f \quad\text{in }\Omega, \\
            \partial_\nu p &= 0 \quad\text{on }\partial\Omega,
        \end{aligned}
    \right.
\end{equation}
for $f=Au_\rho-y^\delta$; see, e.g., \cite[Lem.~2.26]{TroBook}. (Here we rely on the fact that the adjoint state $p_\rho\in H^1(\Omega)$ to allow a pointwise almost everywhere evaluation.)

We now wish to solve \eqref{eq:solution:projection} using a locally superlinearly convergent semi-smooth Newton method \cite{Hintermuller:2002a,Ulbrich:2011}. However, it is known that the pointwise projection is semi-smooth from $L^p(\Omega)\to L^q(\Omega)$ if and only if $p>q$, and hence \eqref{eq:solution:projection} is not semi-smooth with respect to $u_\rho$. Usually, this is addressed by including additional (small) Tikhonov regularization in \eqref{eq:quasisolution}, which allows canceling $u_\rho$ inside the projection; see, e.g., \cite{Hintermuller:2002a,Hinze2005} as well as \cite[Thm.~2.28, Ch.~2.12.4]{TroBook}. Since the focus of this work is Ivanov regularization, we do not follow this route here and instead consider a finite element discretization of \eqref{eq:quasisolution}.

Let $Y_h \subset L^2(\Omega)$ be a finite-dimensional space spanned by the usual continuous piecewise linear nodal basis (``hat'') functions based on the vertices $\{x_j\}_{j=1}^{N}$ of a triangulation of $\overline\Omega$. Taking now $y_h,u_h,p_h\in Y_h$ (which we identify with the vector of their basis coefficients, which coincide with the values at the nodes in this case), noting that functions in $Y_h$ attain their maximum and minimum at the nodes, and introducing the stiffness matrix $K_h\in \R^{N\times N}$ and the mass matrix $M_h\in \R^{N\times N}$, we obtain the discretized conditions
\begin{equation}
    \label{eq:discretesystem}
    \left\{
        \begin{aligned}
            (K_h+cM_h) y_h &= M_h u_h,\\
            (K_h+cM_h) p_h &= M_h (y_h-y^\delta_h),\\
            u_h &= \proj_{[-\rho,\rho]}(u_h-p_h),
        \end{aligned}
    \right.
\end{equation}
where $y^\delta_h\in Y_h$ denotes the $L^2$ projection of $y^\delta$ and the projection is now understood as componentwise. This is now semi-smooth from $\R^N$ to $\R^N$, where a Newton derivative at $v\in \R^N$ direction $w\in \R^N$ is given componentwise by
\begin{equation}
    [D_N \proj_{[-\rho,\rho]}(v)w]_i =
    \begin{cases}
        w_i  \quad & \text{if }  |w_i| \leq \rho, \\
        0  \quad & \text{else.} 
    \end{cases}
\end{equation}  
Introducing the active and inactive sets as 
\begin{equation}\label{eq:activeset}
    \left\{
        \begin{aligned}
            \calA_+ & :=  \setof{i}{u_i - p_i > \rho}    \\
            \calA_- & :=  \setof{i}{u_i - p_i <-\rho}    \\
            \calI & :=  \setof{i}{|u_i - p_i| \leq \rho}   
        \end{aligned}
    \right.
\end{equation}
and the corresponding characteristic functions via
$[\1_{\calA}]_i = 1$ if $i\in \calA$ and $0$ else, we can write a Newton step as 
\begin{multline}\label{newton_step}
    B_k := 
    \begin{pmatrix}
        K_h+cM_h  &    O        & -M_h   \\
        -M_h   &  K_h+cM_h      &  O    \\
        O   &  \1_{\calA_+^k\cup\calA_-^k}  & \1_{\calI^k}-\1_{\calA_+^k\cup\calA_-^k}
    \end{pmatrix}
    \begin{pmatrix}
        {\delta y}  \\
        {\delta p}   \\
        {\delta u}  
    \end{pmatrix}
    \\
    =  -
    \begin{pmatrix}
        (K_h+cM_h)y_h^k - M_hu_h^k        \\
        (K_h+cM_h)p_h^k - M_h(y_h^k-y^\delta_h)    \\
        u_h-\proj_{[-\rho,\rho]}(u_h^k-p_h^k)
    \end{pmatrix}
    =:  - {b^k} 
\end{multline}
and setting $y_h^{k+1}=y_h^k+\delta y$, $p_h^{k+1}=p_h^k+\delta p$, and $u_h^{k+1}=u_h^k+\delta u$. The iteration is terminated if either a fixed number $k_{\max}$ of iterations is exceeded or if both $\calA_+^k=\calA_+^{k-1}$ and $\calA_-^k=\calA_-^{k-1}$. In the latter case, $(p_h^k,y_h^k,u_h^k)$ is already a solution to \eqref{eq:discretesystem}; see \cite[Rem.~7.1.1]{Kunisch:2008a}.
To improve the robustness of the procedure, we also include dampening, which requires an additional criterion that $\norm{b^k}_2\leq \mathrm{tol}$ for some small $\mathrm{tol}>0$ for successful termination. The full procedure is given in Algorithm \ref{alg:ssn}.
\begin{algorithm}[ht]
    \caption{Damped semi-smooth Newton method}\label{alg:ssn}
    \begin{algorithmic}[1]
        \Require {$y^\delta_h$, $\rho>0$, $q\in(0,1)$, $i_{\max}$, $k_{\max}$, $\mathrm{tol}>0$, $y^0_h$, $p_h^0$, $u_h^0$}
        \For{$k=1,\dots,k_{\max}$}
        \State compute active sets $\calA_+^k,\calA_-^k,\calI^k$ from \eqref{eq:activeset}
        \State compute Newton matrix $B_k$, right-hand side $b^k$ from \eqref{newton_step}
        \If{$\norm{b_k}_2 < \mathrm{tol}$}
        \State set $\mathrm{conv}=\mathrm{true}$;
        \Return
        \EndIf
        \State solve Newton step \eqref{newton_step} for $(\delta y,\delta p,\delta u)$
        \For {$i=1,\dots,i_{\max}$}
        \State set $y_h^i + q^{i-1} \delta y$, $p_h^i + q^{i-1} \delta p$, $u_h^i + q^{i-1} \delta u$
        \State compute $b^i$ from \eqref{newton_step}
        \If{$\norm{b^i}_h < \norm{b^k}_h$}
        \State set $y_h^{k+1} = y_h^i$, $p_h^{k+1} = p_h^i$, $u_h^{k+1} = u_h^i$; \textbf{break}
        \EndIf
        \EndFor
        \EndFor
        \State set $\mathrm{conv}=\mathrm{false}$
        \Ensure{$y_h^k,p_h^k,u_h^k$, $\mathrm{conv}$}
    \end{algorithmic}
\end{algorithm}

In practice, we have to deal with the fact that Newton methods converge only locally. We therefore use a backtracking-type method for computing a regularization parameter $\rho$ satisfying the discrepancy principle \eqref{eq:discrepancy} as a continuation strategy. Our approach is based on the following two observations: First, since we are dealing with a discretized problem, the operator $A_h:=(K_h+cM_h)^{-1}$ is invertible and thus any $y^\delta_h$ is in the range of $A_h$. There thus exists a $\hat \rho$ such that the constraint in the discrete version of \eqref{eq:quasisolution} is inactive and the semi-smooth Newton method will therefore converge in a single step. Second, the solution for a given $\rho_k$ will be a good starting point for $\rho_{k+1}<\rho_k$ provided the difference is not too large. We thus proceed in three phases:
\begin{enumerate}
    \item[I.] Starting from some $\rho_0\geq 0$, we quickly increase $\rho_k$ until the Newton method converged in a fixed number of iterations and the residual is smaller than the noise level.
    \item[II.] We then quickly decrease $\rho_k$ until the Newton method fails to converge.
    \item[III.] Starting from the last converged iteration, we adaptively decrease $\rho_k$ further to ensure convergence of the Newton method, until the discrepancy principle \eqref{eq:discrepancy} is satisfied.
\end{enumerate}
The full procedure is given in Algorithm \ref{alg:param}. Of course, we also terminate the iteration if the discrepancy principle is already satisfied in phase I or II. Furthermore, to reduce the number of computed quasi-solutions, the increase of $\rho_k$ in phase I can be made adaptive as well.
\begin{algorithm}[ht]
    \caption{Parameter choice}\label{alg:param}
    \begin{algorithmic}[1]
        \Require{$y^\delta$, $\delta$, $\tau>1$, $\rho_0\geq 0$}
        \State set $y^k_h=0$, $p^k_h=0$, $u^k_h=0$
        \For{$k=0,\dots$}\Comment{Phase I}
        \State compute $y^k_h,p^k_h,u^k_h,\mathrm{conv}$ using \cref{alg:ssn} for $\rho_k$, $y^{k-1}_h,p^{k-1}_h,u^{k-1}_h$
        \State compute $d_k = \norm{y_k - y^\delta}_2$
        \If{$d_k<\delta$ and $\mathrm{conv}=\mathrm{true}$}
        \State \textbf{break}
        \EndIf
        \State set $\rho_{k+1} = \rho_k + \rho_0$
        \EndFor
        \State set $\rho_0 = \rho_k/2$, $y^0_h = y^k_h$, $p^0_h = p^k_h$, $u^0_h = u^k_h$
        \For{$k=0,\dots$}\Comment{Phase II}
        \State compute $y^k_h,p^k_h,u^k_h,\mathrm{conv}$ using \cref{alg:ssn} for $\rho_k$, $y^{k-1}_h,p^{k-1}_h,u^{k-1}_h$
        \State compute $d_k = \norm{y_k - y^\delta}_2$
        \If{$d_k>\delta$ or $\mathrm{conv}=\mathrm{false}$}
        \State \textbf{break}
        \EndIf
        \State set $\rho_{k+1} = \rho_k/2$
        \EndFor
        \State set $\Delta\rho=\rho_{k-1}/2$, $\rho_0 = \rho_{k-1}-\Delta\rho$, $y^0_h = y^k_h$, $p^0_h = p^k_h$, $u^0_h = u^k_h$
        \For{$k=0,\dots$}\Comment{Phase III}
        \State compute $y^k_h,p^k_h,u^k_h,\mathrm{conv}$ using \cref{alg:ssn} for $\rho_k$, $y^{k-1}_h,p^{k-1}_h,u^{k-1}_h$
        \State compute $d_k = \norm{y_k - y^\delta}_Y$
        \If{$\delta \leq d_k\leq \tau\delta$ and $\mathrm{conv}=\mathrm{true}$}
        \State \textbf{break}
        \ElsIf{$d_k> \tau\delta$ or $\mathrm{conv}=\mathrm{false}$}
        \State set $\Delta\rho \gets \Delta\rho/2$
        \State set $\rho_{k+1} = \rho_k + \Delta\rho$
        \Else 
        \State set $\rho_{k+1} = \rho_k - \Delta\rho$
        \EndIf
        \EndFor
        \Ensure{$y^k_h,p^k_h,u^k_h$, $\rho_k$}
    \end{algorithmic}
\end{algorithm}

\section{Numerical example}\label{sec:example}

We illustrate the regularization properties of the quasi-solution using a numerical example for the setting considered in \cref{sec:solution}, which is a moderately ill-posed problem. 
Specifically, we take $\Omega=[-1,1]^2\subset \R^2$ and discretize it using a uniform (Friedrichs--Keller) triangulation with $N=128\times 128$ vertices. The potential coefficient is fixed at $c=1$. We then choose a piecewise constant true parameter $u^\dag$ consisting of inclusions of different (positive and negative) heights with $\rho^\dag:=\norm{u^\dag}_{L^{\infty}(\Omega)}=4$; see \cref{fig:recon:true}. (For the sake of conciseness, we omit the subscript $h$ in the following.) From this, noisy data is generated by setting
\begin{equation*}
    y^\delta = y^\dag + \frac{s}{100}\norm{y^\dag}_\infty \frac{\eta}{\norm{\eta}_{2}},
\end{equation*}
where $y^\dag = A_h u^\dag$, $s>0$ is a chosen relative noise percentage, $\eta\in \R^N$ is a random vector with $\eta_i$, $1\leq i\leq N$, independently normally distributed with mean $0$ and standard deviation $1$, and $\norm{\eta}_2 := \eta^TM_h \eta$ is the discrete $L^2(\Omega)$-norm.

We then compute the quasi-solution $u^\delta_\rho$ using a Matlab implementation of the procedure discussed in \cref{sec:solution}, where the parameters are chosen as follows. In \cref{alg:ssn}, we set $q=0.7$, $i_{\max}=10$, $k_{\max}=30$ and $\mathrm{tol}=10^{-9}$. In \cref{alg:param}, we set $\tau = 1.1$ and $\rho_0=10$.

\Crefrange{fig:recon:1}{fig:recon:0001} show the quasi-solutions for $s = 1,0.02,0.01,0.001,0.0001$ with corresponding $\rho=\rho(\delta,y^\delta)\approx 2.5,3.896,3.952,3.995,3.999$ according to the discrepancy principle. First, we note that in all cases $\norm{u^\delta_\rho}_{L^\infty(\Omega)} = \rho$ as expected from \cref{cor:quasisol_boundary}. For $s=1$, we have in fact that $u^\delta_\rho(x) = \rho$ almost everywhere, which leads to a poor reconstruction quality. However, as $s$ and therefore $\delta:=\norm{y^\delta-y^\dag}_{2}$ decreases, the quality increases until the quasi-solution is virtually identical to $u^\dag$ for $s=0.0001$. While the reconstruction only becomes really acceptable for $s<0.01$, two properties are of note: First, ignoring the noisy background, the location and shape of the inclusions is recovered well even for large $s$; in particular, no smoothing is visible in any of the reconstructions. Second, this is especially the case for the inclusions of largest magnitude for which $u^\dag_\rho(x) = \rho^\dag$, for which location and shape are reconstructed perfectly (as can be seen -- with a bit of effort -- even for $s=1$ in \cref{fig:recon:1}). This ties in with \cref{rem:bregman}, which indicates improved convergence behavior at these points. 
\begin{figure}[htp]
    \centering
    \begin{subfigure}[t]{0.495\textwidth}
        \centering
        \includegraphics[width=\textwidth]{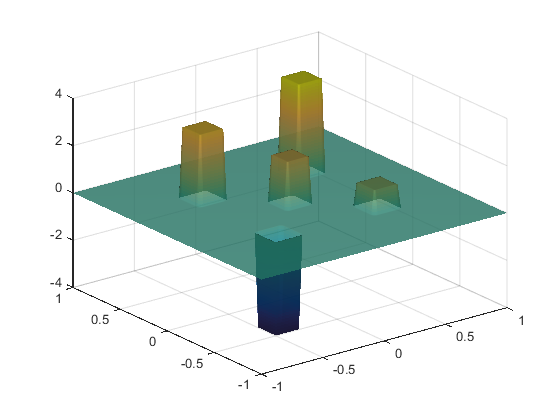}
        \caption{$u^\dagger$ with $\norm{u^\dag}_{L^\infty(\Omega)} = 4$}\label{fig:recon:true}
    \end{subfigure}
    \begin{subfigure}[t]{0.495\textwidth}
        \centering
        \includegraphics[width=\textwidth]{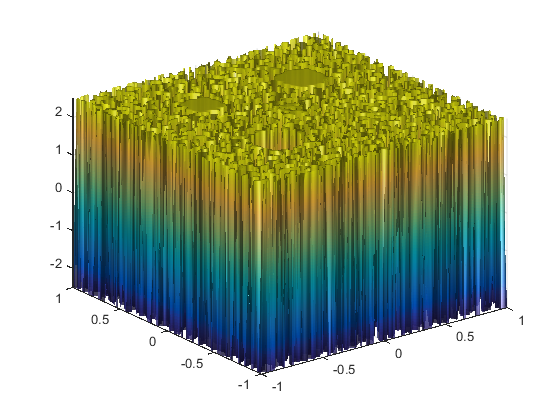}
        \caption{$u^\delta_\rho$ for $s=1\%$, $\rho \approx 2.5$}
        \label{fig:recon:1}
    \end{subfigure}
    \begin{subfigure}[t]{0.495\textwidth}
        \centering
        \includegraphics[width=\textwidth]{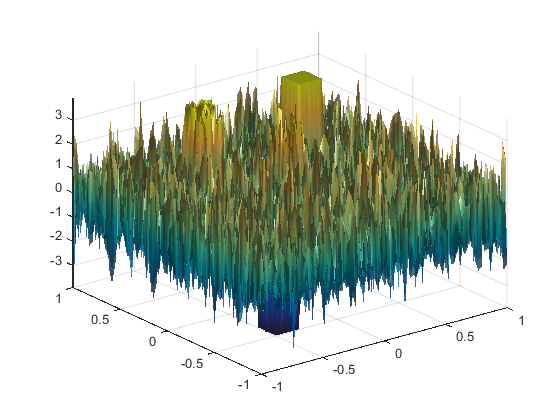}
        \caption{$u^\delta_\rho$ for $s=0.02$, $\rho \approx 3.896$}
        \label{fig:recon:02}
    \end{subfigure}
    \begin{subfigure}[t]{0.495\textwidth}
        \centering
        \includegraphics[width=\textwidth]{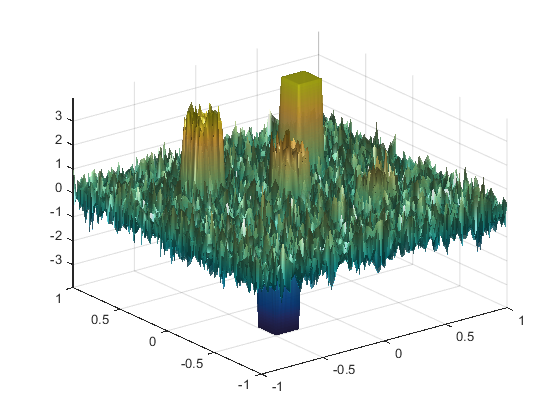}
        \caption{$u^\delta_\rho$ for $s = 0.01$, $\rho \approx 3.952$}
        \label{fig:recon:01}
    \end{subfigure}
    \begin{subfigure}[t]{0.495\textwidth}
        \centering
        \includegraphics[width=\textwidth]{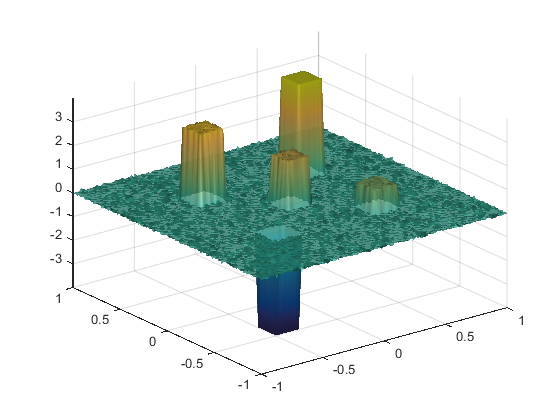}
        \caption{$u^\delta_\rho$ for $s=0.001$, $\rho \approx 3.995$}
        \label{fig:recon:001}
    \end{subfigure}
    \begin{subfigure}[t]{0.495\textwidth}
        \centering
        \includegraphics[width=\textwidth]{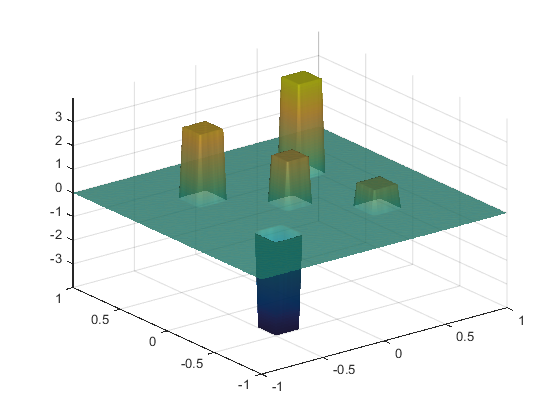}
        \caption{$u^\delta_\rho$ for $s = 0.0001$, $\rho \approx 3.999$}
        \label{fig:recon:0001}
    \end{subfigure}
    \caption{true parameter $u^\dagger$ and quasi-solutions $u^\delta_\rho$ for different noise percentages}\label{fig:recon}
\end{figure}

This is further illustrated by quantitative results for a larger selection of values of the noise percentage $s$ in \cref{tab:errors}, where we give the actual noise level $\delta$, the discrepancy $d(\rho,y^\delta)$ achieved by the parameter choice \cref{alg:param}, and the corresponding reconstruction error of the quasi-solution. For the sake of completeness, we give here both the $L^2(\Omega)$-error $\norm{u^\delta_\rho-u^\dag}_2$ and the $L^{\infty}(\Omega)$-error $\norm{u^\delta_\rho-u^\dag}_\infty$ as well as the error of the duality pairing $\dual{\xi,u^\delta_\rho-u^\dag}$ in the discrete Bregman distance $D^\xi_J(u^\delta_\rho;u^\dag)$ for $\xi\in\R^N$ constructed according to \cref{rem:bregman}, i.e.,
\begin{equation}
    \xi_i = 
    \begin{cases} 
        \phantom{-}d & \text{if }u^\dag_i = \rho^\dag,\\
        \phantom{-}0 & \text{if }|u^\dag_i| < \rho^\dag,\\
        {-}d & \text{if }u^\dag_i = -\rho^\dag,
    \end{cases}
    \qquad\text{for}\qquad d=\left|\setof{i}{|u^\dag_i| = \rho^\dag}\right|.
\end{equation}
(Note that in the discrete setting, $A_h$ is invertible and hence $\xi\in R(A_h^*)$.) It can be observed that at least for $s\leq 0.01$, all these errors behave as $\mathcal{O}(\delta)$, corroborating both \cref{thm:conv_rates} and the observed pointwise convergence.
\begin{table}[b]
    \centering
    \caption{quantitative results: noise percentage $s$, effective noise level $\delta$, reconstruction errors in $L^2(\Omega)$- and $L^\infty(\Omega)$-norms and related to Bregman distance $D^\xi_J$}
    \label{tab:errors}
    \begin{tabular}{%
            S[table-format=1e-1]
            S[table-format=1.3e-1]
            S[table-format=1.3e-1]
            S[table-format=1.3]
            S[table-format=1.3e-1]
            S[table-format=1.3e-1]
            S[table-format=1.3e-1]
        }
        \toprule
        {s} & {$\delta$} & {$d(\rho,y^\delta)$} & {$\rho(\delta,y^\delta)$} & {$\norm{u^{\delta}_{\rho}-u^\delta}_{\infty}$} & {$\norm{u^{\delta}_{\rho}-u^\delta }_{2}$} & {$\dual{\xi,u^{\delta}_{\rho}-u^\dagger}$} \\
        \midrule
        e-00 & 1.310e-03 & 1.321e-03 & 2.500 & 4.500e+00 & 3.571e+00 & 1.500e+00        \\
        e-01 & 1.310e-04 & 1.400e-04 & 3.594 & 5.594e+00 & 3.101e+00 & 4.063e-01        \\
        e-02 & 1.310e-05 & 1.358e-05 & 3.952 & 2.894e+00 & 5.620e-01 & 4.836e-02        \\
        e-03 & 1.310e-06 & 1.367e-06 & 3.995 & 2.051e-01 & 5.661e-02 & 4.944e-03        \\
        e-04 & 1.310e-07 & 1.391e-07 & 3.999 & 2.317e-02 & 5.726e-03 & 4.987e-04        \\
        e-05 & 1.310e-08 & 1.357e-08 & 4.000 & 2.070e-03 & 5.612e-04 & 4.844e-05        \\
        \bottomrule
    \end{tabular}
\end{table}

\section{Conclusion}

We have extended the method of quasi-solutions, or Ivanov regularization, to non-reflexive Banach spaces and shown weak* stability and convergence as well as convergence rates in Bregman distances. In particular, we have characterized parameter choice rules that lead to a convergent regularization method. While it turns out that in this setting, a true a priori choice is not feasible, the classical a posteriori choice according to the discrepancy principle is possible and can be exploited as a continuation strategy for the efficient numerical computation of quasi-solutions to inverse source problems using a Newton-type method. Numerical examples illustrate the regularization properties of this approach.

The results in \cref{sec:example} show that the method of quasi-solutions is indeed a very weak regularization. On the one hand, this is beneficial since it doesn't introduce additional smoothing as Tikhonov regularization with an $L^2(\Omega)$ penalty would. On the other hand, the reconstructions show large residual noise unless the data noise is relatively small. This indicates that at least for inverse source problems, Ivanov regularization is likely too weak for practical application in general; an exception would be problems where it is expected that $u^\dag\in\{-\rho^\dag,\rho^\dag\}$ almost everywhere or where the main interest lies in the correct localization of strong inclusions.

Further study of quasi-solutions in Banach spaces would therefore be justified. Besides the extension to nonlinear parameter identification problems for PDEs mentioned in the introduction, it would be interesting to consider more general choices of $M_\rho$ such as, e.g., $M_\rho=\setof{u}{0\leq u(x)\leq \rho}$ or $M_\rho=\setof{u}{\rho^{-1}\leq u(x) \leq \rho}$; the former would be relevant for the problem of identifying a potential coefficient in an elliptic PDE (and furnish a link to sparsity regularization), while the latter would be appropriate for identification of a diffusion coefficient. Finally, it is an open question whether heuristic parameter choice rules for $\rho$ are possible, e.g., based on a model function approach for the distance function $d(\rho,y^\delta)$.

\section*{Acknowledgments}

This work was supported by the German Science Foundation (DFG) under grant CL 487/1-1. 
The authors wish to thank Barbara Kaltenbacher for helpful remarks and discussions.

\printbibliography

\end{document}